\makeatletter\@addtoreset{equation}{section}\makeatother
\makeatletter\@addtoreset{figure}{section}\makeatother
\makeatletter\@addtoreset{table}{section}\makeatother
\makeatletter\@addtoreset{equation}{section}\makeatother
\makeatletter\@addtoreset{figure}{section}\makeatother
\makeatletter\@addtoreset{table}{section}\makeatother
\newtheorem{theorem}{Theorem}[section]
\newtheorem{theoremL}{Theorem}
\newcommand{\R}{{\mathbb R}}
\newcommand{\Z}{{\mathbb Z}}
\newcommand{\N}{{\mathbb N}}
\newcommand{\T}{{\mathbb T}}
\newcommand{\op}[1]{\!\!\mathop{\rm ~#1}\nolimits}
\newenvironment{remark}{\refstepcounter{theorem}\par\medskip\noindent{\bf
Remark~\thetheorem~~}}{\unskip\nobreak\hfill\hbox{ $\oslash$}\par\bigskip}
\newenvironment{example}{\refstepcounter{theorem}\par\medskip\noindent{\bf
Example~\thetheorem~~}}{\unskip\nobreak\hfill\hbox{ $\oslash$}\par\bigskip}
\newenvironment{definition}{\refstepcounter{theorem}\par\medskip\noindent{\bf
Definition~\thetheorem~~}}
\newcommand{\deriv}[2]{\frac{\partial #1}{\partial #2}}
\newcommand{\restr}{\upharpoonright}
\newcommand{\RM}{\mathbb{R}}
\newcommand{\NM}{\mathbb{N}}
\renewcommand{\geq}{\geqslant}
\renewcommand{\leq}{\leqslant}
\begin{document}

\title[The affine invariant of generalized semitoric systems]{The
  affine invariant of\\ generalized semitoric systems}

\medskip

\author{\'Alvaro Pelayo \,\,\,\,\,\,\, \,\,\,Tudor S. Ratiu
  \,\,\,\,\,\,\, \,\,\,San V\~{u} Ng\d{o}c}

\maketitle

\begin{abstract}
  A \emph{generalized semitoric system} $F:=(J,H)\colon M \to \R^2$ on
  a symplectic $4$\--manifold is an integrable system whose essential
  properties are that $F$ is a proper map, its set of regular values
  is connected, $J$ generates an $S^1$\--action and is not necessarily
  proper. These systems can exhibit focus\--focus singularities, which
  correspond to fibers of $F$ which are topologically multi\--pinched
  tori. The image $F(M)$ is a singular affine manifold which contains
  a distinguished set of isolated points in its interior: the
  focus\--focus values $\{(x_i,y_i)\}$ of $F$. By performing a
  vertical cutting procedure along the lines $\{x:=x_i\}$, we
  construct a homeomorphism $f \colon F(M) \to f(F(M))$, which
  restricts to an affine diffeomorphism away from these vertical
  lines, and generalizes a construction of V\~u Ng\d oc. The set
  $\Delta:=f(F(M)) \subset \R^2$ is a symplectic invariant of
  $(M,\omega,F)$, which encodes the affine structure of $F$. Moreover,
  $\Delta$ may be described as a countable union of planar regions of
  four distinct types, where each type is defined as the region
  bounded between the graphs of two functions with various properties
  (piecewise linear, continuous, convex, etc). If $F$ is a toric
  system, $\Delta$ is a convex polygon (as proven by Atiyah and
  Guillemin\--Sternberg) and $f$ is the identity.
\end{abstract}

\section{Introduction} \label{sec:intro}

Let $(M,\omega)$ be a symplectic $2n$\--manifold, that is, $M$ is a
smooth $2n$\--dimensional manifold $M$ and $\omega$ is a
non\--degenerate closed $2$\--form.  Throughout this paper we assume
that $M$ is connected. However, we do not assume that $M$ is compact.

\subsection{Definitions}

Motivated by \cite{At1982, GuSt1982,PeRaVN2011, PeVN2009, PeVN2010,
  VN2007}, we introduce in this paper a particular class of classical
Liouville integrable systems of the so-called ``generalized semitoric
type".

\begin{definition} \label{is} An \emph{integrable system} on
  $(M,\omega)$ is given by a map $F \colon M \to \R^n$ whose
  components $f_1,\ldots,f_n \colon M \to \mathbb{R}$ are Poisson
  commuting smooth functions which generate vector fields
  $\mathcal{X}_{f_1},\ldots,\mathcal{X}_{f_n}$ (via pairing with
  $\omega$) that are linearly independent at almost every point. A
  \emph{singularity} of $F$ is a point in $M$ where this linear
  independence fails to hold.  A \emph{singular fiber} of $F$ is a
  level set of $F$ which contains at least one singularity of $F$.
\end{definition}
 
\begin{figure}[h] \label{fig:cartographic2}.  \centering
  \includegraphics[height=5.5cm]{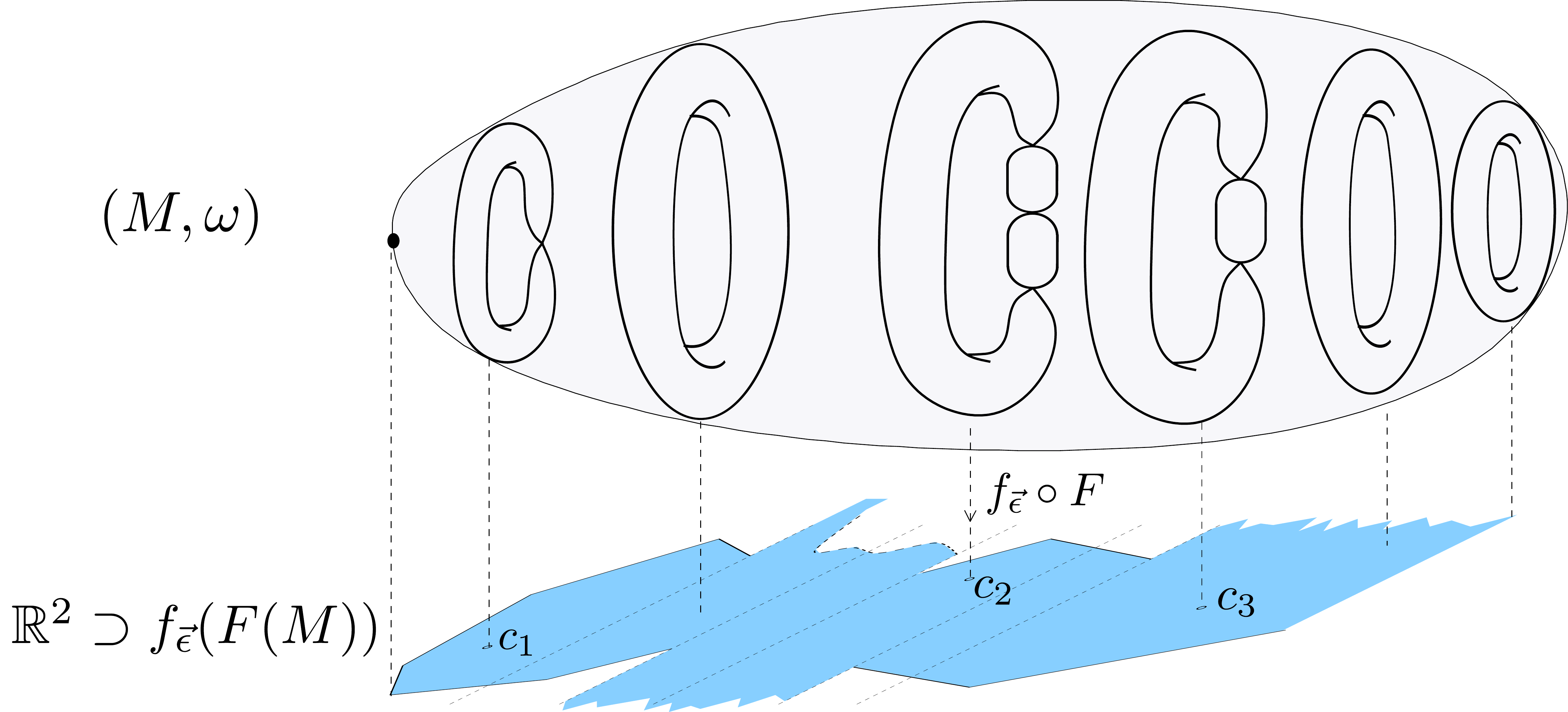}
  \caption{The singular Lagrangian fibration $F \colon M \to \R^2$ of
    a generalized semitoric system with three isolated singular values
    $c_1,\,c_2,\,c_3$.  The generic fiber is a $2$\--dimensional torus,
    the singular fibers are lower dimensional tori, points, or
    multipinched tori. For each $\vec \epsilon \in \{-1,1\}^2$ we
    construct, in Theorems~\ref{theo:polygon} and \ref{theo:b}, a
    homeomorphism $f_{\vec \epsilon} \colon F(M) \to \R^2$ such that
    $(f_{\vec \epsilon} \circ F)(M)$ is a ``nice region" of $\R^2$,
    which is a symplectic invariant. The notion of ``nice region" is
    made precise in Definition~\ref{def:type}.}
\end{figure}

In this article we assume that $n=2$, and use the index free notation
$f_1=J$ and $f_2=H$.

\begin{definition}
  An $S^1$-action on $(M,\omega)$ is \emph{Hamiltonian} if there
  exists a smooth map $J \colon M \to \mathbb{R}$, the \emph{momentum
    map}, such that
$$\omega(\mathcal{X}_M, \cdot) = - {\rm d} J,$$
where $\mathcal{X}_M$ is the infinitesimal generator of the action.
\end{definition}

In this article we construct a symplectic invariant when $F$ is of
\emph{generalized semitoric type}. We refer to
Section~\ref{sec:Eliasson} for a quick review of the notions
concerning singularities used in the following definition.

\begin{definition} \label{gst} An integrable system $F:=(J,H) \colon M
  \to \R^2$ on $(M,\omega)$ is \emph{generalized semitoric} if:
  \begin{enumerate}[({\rm H}.i)]
  \item\label{Hii} $J$ is the momentum map of an effective Hamiltonian
    circle action.
  \item \label{Hiii} The singularities of $F$ are non-degenerate with
    no hyperbolic blocks.
  \item \label{H-proper} $F$ is a proper map (i.e., the preimages of
    compact sets are compact).
  \item \label{H-connected} $J$ has connected fibers, and the
    bifurcation set of $J$ is discrete (here discrete includes
    multiplicity: that is, for any critical value $x$ of $J$, there
    exists a small neighborhood $V\ni x$ such that the critical set of
    $J$ in the preimage $J^{-1}(V)$ only contains a finite number of
    connected components.)
  \end{enumerate}
\end{definition}

\begin{remark}
  (H.\ref{H-connected}) implies that the fibers of $F$ are also
  connected by \cite{PeRaVN2011}. (H.\ref{H-proper}),
  (H.\ref{H-connected}) are implied by (H.\ref{Hii}),(H.\ref{Hiii})
  when $J$ is proper. In some simple physical models like the
  spherical pendulum (Example~\ref{sph}), $J$ is not proper but
  (H.\ref{H-proper}), (H.\ref{H-connected}) still hold.
\end{remark}

A typical generalized semitoric system is depicted in
Figure~\ref{fig:cartographic2}.
 
For background material on integrable systems and group actions, see
\cite{PeVN2011}.

\subsection{Singularities}
The class of systems in Definition \ref{gst} may have the so called
\emph{focus\--focus singularities}, and give rise to fibers of $F$
which are multi\--pinched tori. Focus\--focus singularities appear in
algebraic geometry \cite{GS06} and symplectic topology, e.g.,
\cite{LeSy2010, Sy2001, Vi2013} (in the context of Lefschetz
fibrations they are sometimes called \emph{nodes}), and include simple
physical models from mechanics such as the spherical pendulum
(\cite{AbMa1978}).

 \begin{figure}[h]
   \centering
   \begin{minipage}[c]{0.45\linewidth}
     \includegraphics[width=\textwidth]{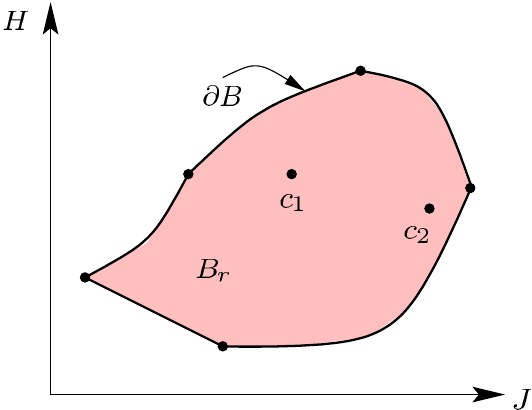}
   \end{minipage}\hfill%
   \begin{minipage}[c]{0.45\linewidth}
     \includegraphics[width=\textwidth]{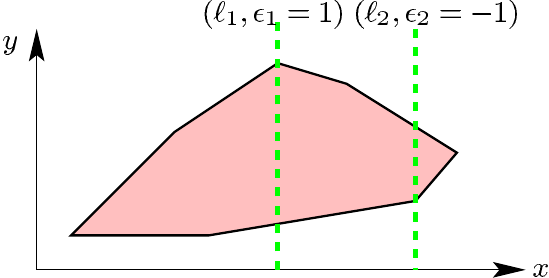}
   \end{minipage} \caption{In general $F(M) \subseteq \R^2$ is not
     convex. The interior of $F(M)$ contains two isolated singular
     values $c_1=(x_1,y_1)$ and $c_2=(x_2,y_2)$. By performing a
     vertical cutting procedure along the lines $\ell_i:=\{x:=x_i\}$,
     we construct a homeomorphism $f \colon F(M) \to f(F(M))$, which
     restricts to an affine diffeomorphism away from these vertical
     lines. The right hand side figure displays the associated polygon
     with the distinguished lines.  }
   \label{xx}
 \end{figure}

\begin{figure}[h]
  \centering
  \includegraphics[height=5.5cm]{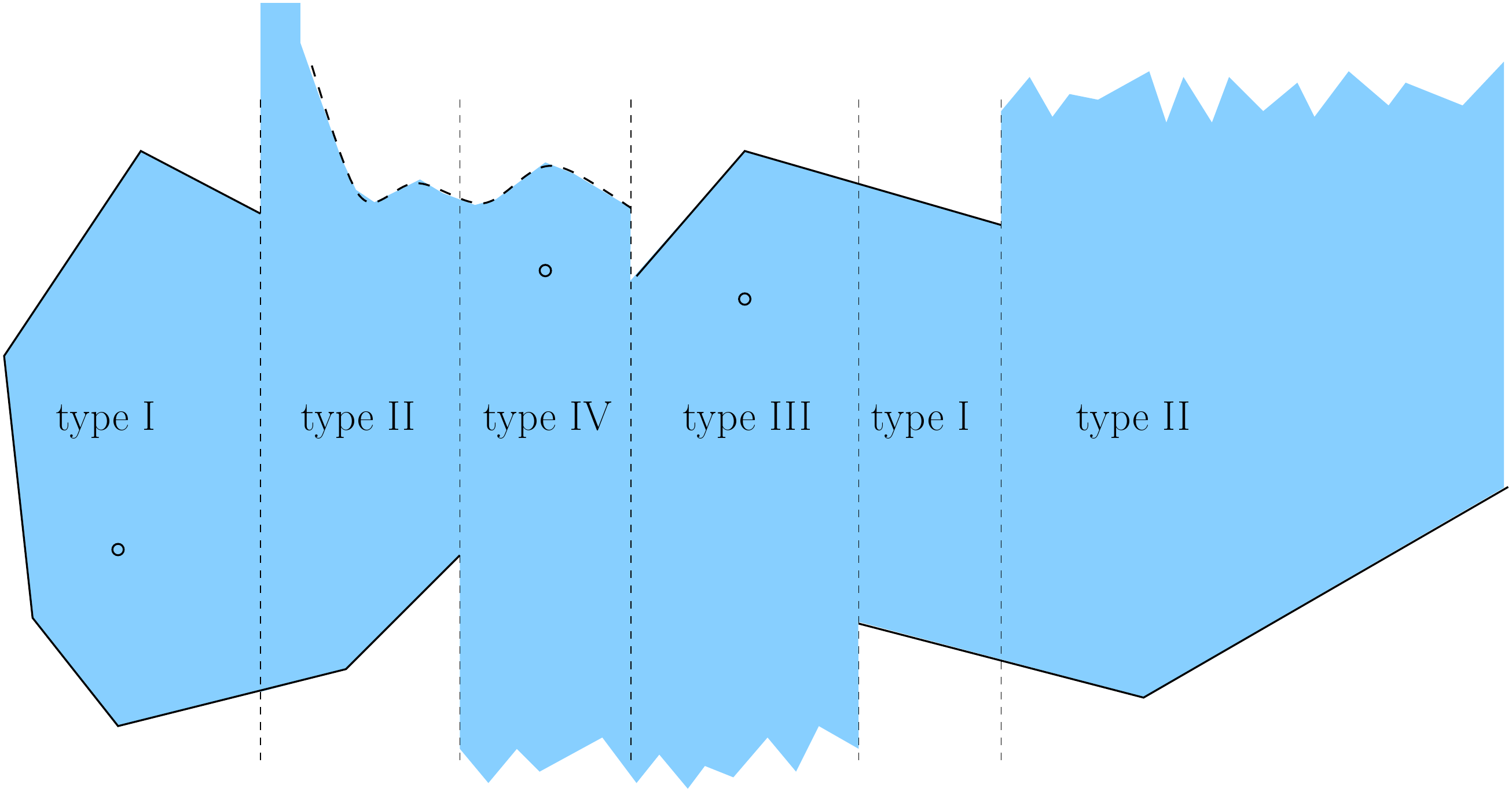}
  \caption{A cartographic projection of $F$. It is a symplectic
    invariant of $F$, see Theorem~\ref{theo:b}.}
  \label{fig:cartographic}
\end{figure}

\subsection{Toric systems: Atiyah and Guillemin--Sternberg
  Theory} \label{XX}

If $M$ is compact and $F:=(J,H)$ is the momentum map of an effective
Hamiltonian $2$\--torus action, all assumptions above hold by the
Atiyah and Guillemin\--Sternberg Theorem (\cite{At1982},
\cite{GuSt1982}) and $F$ does not possess any focus\--focus
singularity.  In this case $(M,\omega,F)$ is called a \emph{toric
  system} or a \emph{symplectic toric manifold}.

Toric systems have been thoroughly studied in the past thirty years
(in any dimension) and, at least from the point of view of symplectic
geometry, a complete picture emerged in the compact case due to the
aforementioned results of Atiyah \cite{At1982}, Guillemin-Sternberg
\cite{GuSt1982}, and a classification result due to Delzant
\cite{Delzant1988}. The first two papers showed that the image
$\mu(M)$ of the momentum map $\mu \colon M \to \R^k$ of a Hamiltonian
$\mathbb{T}^k$-action on a $2n$-dimensional compact connected
symplectic manifold $(M,\omega)$ is a convex polytope in $\R^k$, which
is a symplectic invariant.

\subsection{Goal of this article}

In the present article we will extend to generalized semitoric systems
the results in Section~\ref{XX}, inspired by an extension of the
Atiyah\--Guillemin\--Sternberg result to (non-generalized) semitoric
systems recently achieved by V\~u Ng\d oc \cite{VN2007}. Using Morse
theory and the Duistermaat\--Heckman Theorem for proper momentum maps,
he dealt with integrable systems $F \colon M \to \R^2$ of
\emph{semitoric} type for which, in addition to assumptions
(H.\ref{Hii})\--(H.\ref{H-connected}), $J\colon M \to \R$ is proper.
Then he performed a cutting procedure along the vertical lines going
through the isolated singularities of the image $F(M)$ of the system
and constructed a convex polygon from it, which is an invariant of
$F$; see Figure~\ref{xx}.

The difficulty of the generalized situation considered in this article
is due to the fact that the Duistermaat\--Heckman theorem does not
hold for nonproper $J$ (Remark~\ref{DH}), and neither does standard
Morse theory. This has striking consequences for the statement of our
extension: while the invariant in \cite{VN2007} is a class of convex
polygons as in Figure~\ref{xx}, ours is a union of planar regions of
various types (to be precisely defined later), which looks, in
general, like Figure~\ref{fig:cartographic}. This invariant encodes the singular affine structure
induced by the (singular) Lagrangian fibration $F \colon M \to \R^2$ on the
base $F(M)$.  Its construction and properties appear in 
Theorems~\ref{theo:polygon}, \ref{theo:b},
\ref{theo:c}. This affine structure also plays a role in
parts of symplectic topology, mirror symmetry, and algebraic
geometry, see for instance  Auroux~\cite{Au09}, Borman\--Li\--Wu~\cite{BLW13}, 
Kontsevich\--Soibelman~\cite{KS}. Integrable systems exhibiting semitoric 
features appear  in the theory of symplectic quasi-states, see
Eliashberg\--Polterovich \cite{eliashberg}.

 Theorem~\ref{niceexample}
shows that there are many simple examples in which the invariant,
which is the most natural planar representation of the singular affine
structure of the system, has a non\--polygonal, non-convex, form.

\section{Toric and semitoric systems}
\label{sec:toric-semitoric}
Although this section is not original, we put previous results in a
general framework which is better suited for expressing our new
results in the following section.

\subsection{The set of semitoric images}
\label{sec:groupT}
Let $\mathcal{P}(\R^2)$ be the set of subsets of $\R^2$
and $$\mathfrak{F}:=\Big\{\Z,\Z^{+},\,\Z^{-},\,
\{1,\ldots,N\}_{N>0},\,\varnothing\Big\},$$ where $\Z^+=\{n\in\Z\mid
n\geq 0\}$ and $\Z^-=\{n\in\Z\mid n\leq 0\}$.

Let
$$
T:=
\begin{bmatrix}
  1&0\\
  1&1
\end{bmatrix},
$$ 
and consider the group $\mathcal{T}$ whose elements are the matrices
$T^k$, $k \in \Z$, composed with vertical translations.  This gives
rise to the quotient space
$\mathcal{P}^{\mathcal{T}}(\R^2):=\mathcal{P}(\R^2)/
\mathcal{T}$.

\subsection{Action on
  $\mathcal{P}^{\mathcal{T}}(\R^2)\times\R^Z\times\N^Z$}

A vertical line $\mathcal{L} \subset \mathbb{R}^2$ splits
$\mathbb{R}^2$ into two half-spaces. Let $u \in \Z$. We define a map
$t^{u}_{\mathcal{L}}$ acting on $\R^2$ as follows. On the left half
space defined by $\mathcal{L}$, we let the map $t^{u}_{\mathcal{L}}$
act as the identity.  On the right half space, with an origin placed
arbitrarily on $\mathcal{L}$, $t^{u}_{\mathcal{L}}$ acts as the matrix
$T^{u}$.

\begin{definition}
  Let $Z \in \mathfrak{F}$ and let $\vec{x} \in \R^Z$. Let $n\in Z$
  and denote by $\mathcal{L}_n^{\vec{x}}$ the vertical line through
  $(\vec{x}(i),0)$.  We define the action of $\vec u \in \Z^{Z}$ on
  $\mathcal{P}^{\mathcal{T}}(\R^2)\times\R^Z$ by
$$
\vec{u} \cdot (X,\,\vec{x})=\left( \left(\prod_{\vec u(i) \neq 0,
      \,\,\, n \in Z} t^{\vec
      u(i)}_{\mathcal{L}_n^{\vec{x}}}\right)(X), \,\,\vec{x} \right).
$$
Let $\vec k\in\NM^Z$. We finally define the action of $\vec\epsilon\in
\{-1,1\}^Z$ on $\mathcal{P}^{\mathcal{T}}(\R^2)\times\R^Z\times\N^Z$
by the formula
\[
\vec\epsilon \cdot (X,\,\vec{x},\,\vec{k}) = \left((\vec\epsilon \cdot
  \vec k) \cdot (X,\,\vec{x}), \vec k\right),
\]
where $\vec\epsilon \cdot \vec k:= i\mapsto
\frac{1-\epsilon(i)}2k(i)$.  We denote the $\{-1,1\}^{Z}$\--orbit
space by $ \mathfrak{B}_{{\rm
    GST}}(Z):=\mathcal{P}^{\mathcal{T}}(\R^2)\times\R^Z\times\N^Z/{\{-1,1\}^{Z}}.
$
\end{definition}

\subsection{Affine invariant for semitoric systems}

Let $F=(J,H) \colon M \to \R^2$ be a semitoric system, i.e., in
addition to assumptions (H.\ref{Hii})\--(H.\ref{H-connected}), the map
$J\colon M \to \R$ is proper.  There exists a unique $Z \in
\mathfrak{F}$ such that $\vec x \in \R^{Z}$ is the tuple of images by
$J$ of focus\--focus values $c_i=(x_i,y_i)$ of $F$ ordered by
non\--decreasing values, and $\vec k\in\N^Z$ such that $\vec k(i)$ is
the number of focus-focus critical points in the fiber $F^{-1}(c_i)$.

Let $\mathcal{L}^{\vec x}:=(\mathcal{L}_{x_i})_{i \in Z}$ where
$\mathcal{L}_{x_i}$ is the unique vertical line in $\R^2$ through
$(x_i,0)$.  For each fixed $\vec \epsilon \in \{-1,1\}^Z$, V\~u Ng\d
oc constructed \cite[Theorem~3.8 and Proposition~4.1]{VN2007} an
equivalence class of convex polygons in $\R^2$
\begin{eqnarray} \label{xxx} (\Delta_{\vec \epsilon}\,\,\, {\rm mod}
  \,\,\mathcal{T}) \in \mathcal{P}^{\mathcal{T}}(\R^2).
\end{eqnarray}
by performing a cutting procedure along the vertical lines
$\mathcal{L}_{x_i}$.  The ``choice" of cuts is given by
$\vec\epsilon$, where a positive sign corresponds to an upward cut,
and a negative sign corresponds to a downward cut.

\begin{definition}
  Let $(M,\omega, F)$ be a semitoric system.  Define:
  \begin{eqnarray} \label{xxx2} \Delta(M,\omega, F):= (\Delta_{\vec
      \epsilon}\,\,\, {\rm mod}\,\, \mathcal{T},\,\vec{x}, \,\vec k)
    \mod \{-1,1\}^Z \in \mathfrak{B}_{{\rm GST}},
  \end{eqnarray}
  where $\vec\epsilon(i)=1$ for all $i\in Z$ and the action of $
  \{-1,1\}^Z$ is defined above.
\end{definition}

\begin{definition} 
  Let $\mathcal{M}_{{\rm T}}$ be the set of toric systems. Let $Z \in
  \mathfrak{F}$ and let $\mathcal{M}_{{\rm ST}}(Z)$ and
  $\mathcal{M}_{{\rm GST}}(Z)$ be the sets of semitoric and
  generalized semitoric systems $F=(J,H)$, with the images of the
  focus\--focus values of $F$ by $J$ indexed by the set $Z$. Let
  \begin{eqnarray} {\mathcal M}_{{\rm ST}}:= \bigsqcup_{Z \in
      \mathfrak{F}} {\mathcal M}_{{\rm ST}}(Z); \,\,\,\,\,\,\,\,\,\,\,
    {\mathcal M}_{{\rm GST}}:= \bigsqcup_{Z \in \mathfrak{F}}
    {\mathcal M}_{{\rm GST}}(Z); \,\,\,\,\,\,\,\,\,\,\,
    \mathfrak{B}_{{\rm GST}}:= \bigsqcup_{Z \in \mathfrak{F}}
    \mathfrak{B}_{{\rm GST}}(Z). \nonumber \nonumber
  \end{eqnarray}
\end{definition}

We now recall the notion of isomorphism for generalized semitoric
systems, which coincide with the notion introduced in~\cite{VN2007}
for proper semitoric systems.

\begin{definition} \label{def:iso} The generalized semitoric systems
  $(M_1,\, \omega_1,\,F_1:=(J_1,\,H_1))$ and $(M_2,\, \omega_2,\,
  F_2:=(J_2,\,H_2))$ are \emph{isomorphic} if there exists a
  symplectomorphism $\varphi: M_1 \to M_2$ such that
  $\varphi^*(J_2,\,H_2)=(J_1,\,h(J_1,\,H_1))$ for a smooth $h$ such
  that $\frac{\partial h}{\partial H_1}>0$.
\end{definition}
 
Notice that the set $\mathcal{M}_\textup{T}$ is not invariant under
these isomorphisms. Hence we introduce the following definition.

\begin{definition}
  A generalized semitoric system is said to be of \emph{toric type} if
  it is isomorphic to a toric system. We denote by
  $\mathcal{M}_{\textup{TT}}$ the set of semitoric systems of toric
  type.
\end{definition}

 \begin{remark}
   Clearly $ \mathcal{M}_{{\rm T}}\,\subsetneq\, {\mathcal M}_{{\rm
       ST}}\, \subsetneq \, \mathcal{M}_{{\rm GST}}\,.  $
 \end{remark}

 \begin{theorem}[\cite{VN2007}]
   The class of convex polygons {\rm (\ref{xxx2})} is an invariant of
   the isomorphism type of $F$.\footnote{while $F(M)$ is neither
     generally convex, nor an invariant.}
 \end{theorem}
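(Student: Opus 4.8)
The plan is to show that the data $(\Delta_{\vec\epsilon} \bmod \mathcal{T},\,\vec x,\,\vec k) \bmod \{-1,1\}^Z$ is unchanged when the semitoric system $F$ is replaced by an isomorphic one. So suppose $\varphi \colon M_1 \to M_2$ is a symplectomorphism with $\varphi^*(J_2,H_2) = (J_1, h(J_1,H_1))$ where $\partial h/\partial H_1 > 0$. The strategy is to track each piece of the invariant separately and to verify that the constructed polygon is stable under the allowed ambiguities.

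First I would check that the discrete data $(Z,\vec x,\vec k)$ is preserved. Since $J_1 = \varphi^* J_2 = J_2 \circ \varphi$, the momentum maps agree under $\varphi$, so $\varphi$ carries fibers of $J_1$ to fibers of $J_2$ and critical points of $J_1$ to critical points of $J_2$; hence the bifurcation values of $J_1$ and $J_2$ coincide and the index set $Z$ is the same. Because $h$ preserves the first coordinate and is strictly increasing in $H_1$, the map $(J_1,H_1) \mapsto (J_1, h(J_1,H_1))$ is a fiber-preserving diffeomorphism of the base that moves points only vertically; in particular it sends focus-focus singularities to focus-focus singularities (non-degeneracy and the focus-focus block type are intrinsic and preserved by $\varphi$), it preserves the $J$-values $x_i$, and it preserves the multiplicities $\vec k(i)$, since $\vec k(i)$ counts critical points in a single fiber and $\varphi$ is a bijection of fibers.

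Next I would treat the polygon itself. The key point is that the cutting construction of V\~u Ng\d oc depends on $(M,\omega,F)$ only through the singular integral affine structure that $F$ induces on the base $F(M)$, together with a choice of global action-angle chart respecting the $S^1$-action generated by $J$. Two isomorphic systems induce the same singular affine structure: the transition $h$ preserving the vertical $J$-direction and being increasing in $H$ means the new momentum image is obtained from the old one by a homeomorphism that is affine with respect to the integral affine structure, and the developing map used to build $\Delta_{\vec\epsilon}$ is canonical up to the choices already quotiented out. Concretely, away from the cut lines the cartographic homeomorphism straightens the affine structure, and along each $\mathcal{L}_{x_i}$ the ambiguity in the choice of the cutting/developing is exactly a power $T^{k}$ of the shear applied on one side; the global ambiguity in the base point of the developing map is a vertical translation. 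Both of these are precisely the generators of $\mathcal{T}$, so the image is well-defined in $\mathcal{P}^{\mathcal{T}}(\R^2)$. This is the step I expect to carry the real content, and the main obstacle: one must argue carefully that the only freedom in reconstructing $\Delta_{\vec\epsilon}$ from the abstract affine base is the $\mathcal{T}$-action, and that this freedom is unaffected by composing with $\varphi$ and with the increasing reparametrization $h$.

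Finally I would handle the dependence on the sign choices $\vec\epsilon$. Changing the direction of the cut at the $i$-th line from upward to downward alters the polygon by the prescribed piecewise-integral-affine modification $t^{k(i)}_{\mathcal{L}_{x_i}}$, which is exactly the rule encoded in the $\{-1,1\}^Z$-action via $\vec\epsilon\cdot\vec k \colon i \mapsto \frac{1-\epsilon(i)}{2}k(i)$. Passing to the quotient by $\{-1,1\}^Z$ therefore removes the arbitrariness of the sign choice, and since $\varphi$ and $h$ commute with choosing cut directions (they do not interact with the combinatorial choice of $\vec\epsilon$), the resulting class in $\mathfrak{B}_{\rm GST}$ is invariant. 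Assembling the three parts gives that $\Delta(M,\omega,F)$ depends only on the isomorphism type of $F$, as claimed. All of the quantitative facts needed — the $T^k$ dependence on the cut direction and the vertical-translation/base-point ambiguity — may be quoted from \cite[Theorem~3.8 and Proposition~4.1]{VN2007}.
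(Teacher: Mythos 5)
Your proposal is correct and takes essentially the same approach as the paper: the paper itself quotes this statement from \cite{VN2007}, and its own proof of the generalization (Theorem~\ref{theo:c}) is precisely your core argument --- isomorphic systems have the same leaf space with identical induced integral affine structures, so by the uniqueness of the cartographic map modulo $\mathcal{T}$ (Theorem~\ref{theo:polygon}) the images agree in $\mathcal{P}^{\mathcal{T}}(\R^2)$, while the cut-direction choice is absorbed by the $\{-1,1\}^Z$-quotient. One slip to correct in your middle paragraph: the one-sided shears $t^{k}_{\mathcal{L}_{x_i}}$ are \emph{not} elements of $\mathcal{T}$ (whose elements are globally applied powers of $T$ composed with vertical translations); for fixed $\vec\epsilon$ the ambiguity is exactly $\mathcal{T}$, and one-sided shears arise only from changing a cut direction, i.e., they implement the $\{-1,1\}^Z$-action, as your final paragraph correctly uses.
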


 For a system satisfying properties
 (H\ref{Hii})--(H\ref{H-connected}), in this article we will construct
 a more general symplectic invariant by unwinding the (singular)
 affine structure induced by $F$ on $F(M)$, which extends (\ref{xxx}).
 The fact that $J$ may not be proper complicates the situation a lot
 because the Duistermaat\--Heckman theorem does not hold for nonproper
 momentum maps (Remark~\ref{DH}), and standard Morse theory
 essentially breaks down for nonproper maps.  This is why systems with
 non-proper $J$ were excluded in \cite{PeVN2009,PeVN2010}.

 \section{Summary result: Theorem~\ref{stu}}
 \label{sec:summary}
 As a consequence of Theorems~\ref{theo:polygon}, \ref{theo:b}, and
 \ref{theo:c} (stated and proved in the next sections), we obtain the
 following statement, which is less explicit (less useful for
 computations) but provides a summary of the paper.

  \begin{definition}
    Recall that if $(M,\omega,F) \in \mathcal{M}_{{\rm T}}$ then
    $F(M)$ does not contain focus\--focus singular values, and $F(M)$
    is a convex polygon.  If $(M,\omega,F) \in \mathcal{M}_{{\rm
        ST}}$, let $\Delta(M,\omega,F)$ be as in (\ref{xxx2}).
    Consider the maps
    \begin{eqnarray} \label{p1} \mathcal{C}_{\rm ST} : {\mathcal
        M}_{{\rm ST}} \ni (M,\omega,F) \longmapsto \Delta(M,\omega,F)
      \in \mathfrak{B}_{{\rm GST}}
    \end{eqnarray}
    \begin{equation} \label{p2} \mathcal{C}_{\rm T} : {\mathcal
        M}_{{\rm T}} \ni (M,\omega,F)\longmapsto (F(M) \,\,\, {\rm
        mod}\,\, \mathcal{T}, \,\varnothing, \varnothing) \mod
      \{-1,1\}^Z \in \mathfrak{B}_{{\rm GST}}
    \end{equation}
    and
    \begin{equation} \label{p2} \mathcal{C}_{\rm TT} : {\mathcal
        M}_{{\rm TT}} \ni (M,\omega,F)\longmapsto (F'(M) \,\,\, {\rm
        mod}\,\, \mathcal{T}, \,\varnothing, \varnothing) \mod
      \{-1,1\}^Z \in \mathfrak{B}_{{\rm GST}},
    \end{equation}
    where $F'$ is any toric momentum map isomorphic to $F$ as a
    semitoric system.
  \end{definition}
 
 \begin{definition}
   If $\mathcal{F}$ is a family of integrable systems containing
   $\mathcal{M}_{{\rm T}}$, a \emph{cartographic invariant} is any map
   $\mathcal{C} \colon \mathcal{F} \to \mathfrak{B}_{{\rm GST}}$
   extending $\mathcal{C}_{{\rm T}}$ in (\ref{p2}) and invariant under
   isomorphism.
 \end{definition}
 It follows from the Atiyah\--Guillemin\--Sternberg theory
 and~\cite{VN2007} that the maps $\mathcal{C}_{\textup{T}}$,
 $\mathcal{C}_{\textup{TT}}$ and $\mathcal{C}_{\textup{ST}}$ are
 cartographic invariants.  Notice that it is straightforward to check
 from Definition~\ref{def:iso} that $\mathcal{C}_\textup{TT}$ is
 indeed well defined.

 \begin{theoremL} \label{stu} Let $\mathcal{C}_{{\rm ST}}$,
   $\mathcal{C}_{{\rm TT}}$ and $\mathcal{C}_{{\rm T}}$ be the
   cartographic invariants defined in {\rm (\ref{p1})} and {\rm
     (\ref{p2})}.  Then there exists a cartographic invariant $
   \mathcal{C}_{{\rm GST}} \colon {\mathcal M}_{{\rm GST}}\to
   \mathfrak{B}_{{\rm GST}}$ such that the diagram
   \begin{equation}\label{mainlemma}
     \xymatrix{
       {\mathcal M}_{{\rm T} }  \ar@{^{(}->}[r] \ar_{\mathcal{C}_{\rm
           T}}[rrrd] & {\mathcal M}_{\rm TT}
       \ar@{^{(}->}[r]  \ar^{\mathcal{C}_{\rm TT}}[rrd] & 
       {\mathcal M}_{\rm ST} \ar@{^{(}->}[r]  \ar^{\mathcal{C}_{\rm
           ST}}[rd] & {\mathcal
         M}_{\rm GST} \ar^{\mathcal{C}_{\rm GST}}[d]\\
       & & & \mathfrak{B}_{\rm GST}
     }
   \end{equation}
   is commutative.
 \end{theoremL}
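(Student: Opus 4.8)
The plan is to let $\mathcal{C}_{\rm GST}$ be the invariant produced by the cutting construction of Theorems~\ref{theo:polygon}, \ref{theo:b}, and \ref{theo:c}, and then to check that this single map slots into the diagram. Concretely, for $(M,\omega,F)\in\mathcal{M}_{\rm GST}(Z)$ I would set
\[
\mathcal{C}_{\rm GST}(M,\omega,F) := \big(\Delta_{\vec\epsilon}\,\,{\rm mod}\,\,\mathcal{T},\ \vec x,\ \vec k\big)\,\,{\rm mod}\,\,\{-1,1\}^Z,
\]
where $\Delta_{\vec\epsilon}=f_{\vec\epsilon}(F(M))$ is the nice region built in Theorem~\ref{theo:polygon} for the choice of cuts $\vec\epsilon$ (taking $\vec\epsilon(i)=1$ for all $i$), $\vec x$ is the tuple of $J$-values of the focus-focus values, and $\vec k$ records their multiplicities. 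The purpose of quotienting by $\mathcal{T}$ and by $\{-1,1\}^Z$ is precisely to absorb the two choices inherent in the construction---the chart/overlap ambiguity of the piecewise-affine cartography and the up/down direction of each vertical cut---so that the resulting class in $\mathfrak{B}_{\rm GST}(Z)$ depends only on $(M,\omega,F)$.

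First I would record that $\mathcal{C}_{\rm GST}$ is well defined and invariant under isomorphism. Well-definedness is the statement that $\Delta_{\vec\epsilon}\,\,{\rm mod}\,\,\mathcal{T}$ is independent of the auxiliary choices in the cutting procedure, and that the assembled triple is independent of $\vec\epsilon$ after passing to the $\{-1,1\}^Z$-orbit; both are supplied by Theorem~\ref{theo:b}. Invariance under the isomorphisms of Definition~\ref{def:iso}---where $J$ is preserved and $H$ is replaced by $h(J,H)$ with $\partial h/\partial H>0$---is the content of Theorem~\ref{theo:c}: such an isomorphism fixes the $S^1$-momentum map $J$, hence the $J$-coordinates $\vec x$ of the focus-focus values and their multiplicities $\vec k$, and carries the singular affine structure on $F(M)$ to that on $F'(M)$, yielding the same class in $\mathfrak{B}_{\rm GST}$. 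Moreover, in the toric case there are no focus-focus values, so no cuts are performed and $f_{\vec\epsilon}$ is the identity (whence $\Delta=F(M)$, a convex polygon); thus $\mathcal{C}_{\rm GST}$ restricts to $\mathcal{C}_{\rm T}$ on $\mathcal{M}_{\rm T}$. Together these two facts are exactly the assertion that $\mathcal{C}_{\rm GST}$ is a cartographic invariant.

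It remains to check commutativity of the diagram. Since the four families are nested by inclusion and $\mathcal{C}_{\rm T},\mathcal{C}_{\rm TT},\mathcal{C}_{\rm ST}$ are already known (from Atiyah--Guillemin--Sternberg and \cite{VN2007}) to form a compatible tower of cartographic invariants, it suffices to establish the single identity
\[
\mathcal{C}_{\rm GST}\big|_{\mathcal{M}_{\rm ST}} = \mathcal{C}_{\rm ST}.
\]
The two upper triangles then follow by restricting further to $\mathcal{M}_{\rm TT}$ and $\mathcal{M}_{\rm T}$ and using $\mathcal{C}_{\rm ST}|_{\mathcal{M}_{\rm TT}}=\mathcal{C}_{\rm TT}$ and $\mathcal{C}_{\rm TT}|_{\mathcal{M}_{\rm T}}=\mathcal{C}_{\rm T}$, while the large $\mathcal{C}_{\rm T}$-triangle follows by composition. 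To prove the displayed identity I would specialize the construction to proper $J$: then the Duistermaat--Heckman theorem and standard Morse theory apply, the region $\Delta_{\vec\epsilon}$ is forced to be convex, and the cutting procedure of Theorem~\ref{theo:polygon} must reduce termwise to the one V\~u Ng\d oc performs in \cite[Theorem~3.8 and Proposition~4.1]{VN2007}. Matching the two constructions on the overlap---the same vertical lines $\mathcal{L}_{x_i}$, the same $T$-matrix monodromy across each line, and the same up-cut convention $\vec\epsilon(i)=1$---then gives equality of the classes $\Delta_{\vec\epsilon}\,\,{\rm mod}\,\,\mathcal{T}$ in $\mathcal{P}^{\mathcal{T}}(\R^2)$, and hence of the full triples in $\mathfrak{B}_{\rm GST}$.

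I expect the main obstacle to be exactly this last identification: verifying that the general cutting procedure, which is engineered to cope with the failure of Duistermaat--Heckman and Morse theory for nonproper $J$, genuinely collapses to V\~u Ng\d oc's convex-polygon construction once $J$ becomes proper. Everything else in the proof of Theorem~\ref{stu} is formal---either built into the definition of the group quotients (which encode the two choices) or inherited from the invariance statements of Theorems~\ref{theo:b} and \ref{theo:c}. The genuine mathematical content lives in those three theorems, and Theorem~\ref{stu} is their bookkeeping corollary.
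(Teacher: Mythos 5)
Your proposal is correct and follows essentially the same route as the paper: the paper defines $\mathcal{C}_{\rm GST}$ exactly as you do (its formula~(\ref{deltaf}), with $\vec\epsilon(i)=1$ for all $i$), obtains isomorphism-invariance and the extension of $\mathcal{C}_{\rm T}$ from Theorem~\ref{theo:c}, and treats Theorem~\ref{stu} as a bookkeeping corollary of Theorems~\ref{theo:polygon}, \ref{theo:b} and \ref{theo:c}. Two refinements are worth noting. First, the independence of $\Delta_{\vec\epsilon}$ modulo $\mathcal{T}$ from the auxiliary choices in the construction is not supplied by Theorem~\ref{theo:b} (which only describes the image as a union of regions of types I--IV and gives the volume statement); it is supplied by the final clause of Theorem~\ref{theo:polygon}, namely that $f_{\vec\epsilon}$ is unique modulo left composition by an element of $\mathcal{T}$. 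Second, the step you single out as the main obstacle --- that the general construction collapses to the one in \cite{VN2007} on $\mathcal{M}_{\rm ST}$ --- does not require re-running Duistermaat--Heckman theory or Morse theory in the proper-$J$ case: the map of \cite[Theorem~3.8]{VN2007} already satisfies properties (P.\ref{pi})--(P.\ref{piii}) (Theorem~\ref{theo:polygon} is stated as its generalization), so the same uniqueness clause forces it to coincide with $f_{\vec\epsilon}$ modulo $\mathcal{T}$, and the identity $\mathcal{C}_{\rm GST}|_{\mathcal{M}_{\rm ST}}=\mathcal{C}_{\rm ST}$ follows formally rather than by a termwise matching of the two cutting procedures.
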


 We will prove several theorems which together imply Theorem~\ref{stu}
 and which are more informative because the cartographic invariant is
 explicitly constructed. It would be interesting to prove Theorem
 \ref{stu} (in particular, defining the maps involved) for integrable
 systems on origami manifolds (see~\cite{DGP}) and on orbifolds
 (see~\cite{LT}), where, as far as we know, integrable systems have
 not been studied.

 \section{Main results: Theorems~\ref{theo:polygon}, \ref{theo:b},
   \ref{theo:c}, \ref{niceexample}} \label{sec:main}

 For simplicity, from now on, we use the term ``semitoric" to refer to
 integrable systems satisfying {\rm
   (H\ref{Hii})--(H\ref{H-connected})}, that is, we drop the word
 ``generalized".

 Let $(M, \omega)$ be a connected symplectic $4$-manifold and
 $F:=(J,H) \colon M \to \R^2$ a semitoric system. Next we prepare the
 grounds for the main theorems of the paper.  Let $B_r \subset B$ is
 the set of regular values of $F$.  Since $F$ is proper we know that
 the set of focus-focus critical values of $F$ is discrete.  We denote
 by $c_i:=(x_i,y_i)$, $i \in Z$, the focus-focus critical values of
 $F$, ordered so that $x_i\leq x_{i+1}$, and $k_i$ is the number of
 critical points in $F^{-1}(c_i)$.  Given
 $\vec\epsilon=(\epsilon_i)_{i\in Z}\in\{-1,+1\}^{Z}$, we define the
 vertical closed half line originating at $c_i=(x_i,y_i)$
 by $$\mathcal{L}_i^{\epsilon_i}:= \{(x_i,y) \in \mathbb{R}^2 \mid
 \epsilon_i y\geq\epsilon_i y_i\}$$ for each $i \in Z$, which is
 pointing up from $c_i$ if $\epsilon_i = 1$ and down if $\epsilon_i =
 -1$.  Define $\ell^{\epsilon_i}_i:=B\cap \mathcal{L}_i^{\epsilon_i}
 \subset \mathbb{R}^2$.

 For any $c \in B$, define $I_c:= \left\{i \in Z\mid c \in
   \ell^{\epsilon_i}_i \right\}$ and the map
 $k:\mathbb{R}^2\rightarrow \mathbb{Z}$ by
 \begin{equation}
   k(c): = \sum_{i \in I_c} \epsilon_ik_i\,,
   \label{equ:k}
 \end{equation}
 with the convention that if $I_c = \varnothing$ then $k(c) =0$. The
 sum is finite thanks to (H.\ref{H-connected}). Let
 $\ell^{\vec{\epsilon}}: = k ^{-1}(\mathbb{Z}\setminus \{0\})$.

 For the necessary background on affine manifolds in the discussion
 which follows, readers may consult the appendix
 (Section~\ref{sec:affine}).  We write $\mathbb{A}^2_{\mathbb{Z}}$ for
 $\mathbb{R}^2$ equipped with its standard integral affine structure
 with automorphism group $\operatorname{Aff}(2,\mathbb{Z}):=
 \operatorname{GL}(2,\mathbb{Z})\ltimes\mathbb{R}^2$.

 The integral affine structure on $B_r$, which in general is not the
 affine structure induced by $\mathbb{A}^2_{\mathbb{Z}}$, is defined
 for instance in \cite[Section~3]{VN2007} or \cite[Appendix
 A2]{HoZe1994}; see also Section~\ref{sec:affine}: affine charts near
 regular values are given by action variables $f \colon U \subset F(M)
 \to \R^2$ on open subsets $U$ of $F(M)$ with the induced subspace
 topology and any two such charts differ by the action of ${\rm
   Aff}(2,\Z)$.

 Let $X$ and $Y$ be smooth manifolds and $A\subset X$. A map $f:A\to
 Y$ is said to be \emph{smooth} if every point in $A$ admits an open
 neighborhood in $X$ on which $f$ can be smoothly extended. The map
 $f$ is called a \emph{diffeomorphism onto its image} if $f$ is
 injective, smooth, and its inverse $f^{-1} \colon f(A) \to A$ is a
 smooth map, in the sense above.

 The following theorem is a generalization of~\cite[Theorem
 3.8]{VN2007}.
 \begin{theoremL}
   \label{theo:polygon}
   Let $F \colon M \to \mathbb{R}^2$ be a semitoric system in
   $\mathcal{M}_\textup{GST}(Z)$, for some $Z\in\mathfrak{F}$.  For
   every $\vec\epsilon\in\{-1,+1\}^Z$ there exists a
   homeomorphism $$f_{\vec \epsilon} \colon B \to f_{\vec \epsilon}
   (B)\subseteq \mathbb{R}^2$$ of the form $f_{\vec \epsilon}
   (x,y)=(x,f^{(2)}_{\vec\epsilon}(x,y))$ such that:
   \begin{enumerate}[{\rm (P.i)}]
   \item \label{pi} the restriction
     $f_{\vec{\epsilon}}|_{\left(B\setminus
         \ell^{\vec{\epsilon}}\right)}$ is a diffeomorphism onto its
     image, with positive Jacobian determinant;
   \item \label{pii} the restriction
     $f_{\vec\epsilon}|_{\left(B_r\setminus\ell^{\vec\epsilon}
       \right)}$ sends the integral affine structure of $B_r$ to the
     standard integral affine structure of
     $\mathbb{A}^2_{\mathbb{Z}}$;
   \item \label{piii} the restriction $f_{\vec
       \epsilon}|_{\left(B_r\setminus\ell^{\vec\epsilon} \right)}$
     extends to a smooth multi-valued map $B_r \to \mathbb{R}^2$ and
     for any $i \in Z$ and $c\in\ell_i^{\epsilon_i}\setminus \{c_i\}$,
     we have
     \begin{eqnarray} \label{jump} \lim_{\substack{(x,y)\rightarrow
           c\\x<x_i}} \op{d}\!f_{\vec \epsilon} (x,y) =
       T^{k(c)}\lim_{\substack{(x,y)\rightarrow
           c\\x>x_i}}\op{d}\!f_{\vec \epsilon} (x,y),
     \end{eqnarray}
     where $k(c)$ is defined in~\eqref{equ:k}.
   \end{enumerate}
   Such an $f_{\vec \epsilon}$ is unique modulo a left composition by
   a transformation in $\mathcal{T}$.
 \end{theoremL}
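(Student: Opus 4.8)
The plan is to realize $f_{\vec\epsilon}$ as a \emph{developing map} for the integral affine structure on $B_r$: after excising the cut lines $\ell^{\vec\epsilon}$ to pass to a simply connected domain, one integrates the affine structure into $\mathbb{A}^2_{\mathbb{Z}}$, and then extends the resulting map continuously across the singular set to obtain a homeomorphism of all of $B$. First I would fix the local data. Near a regular value the \aangles{} coordinates give an affine chart, and the crucial observation is that, because $J$ is the momentum map of the global effective $S^1$\--action (H.\ref{Hii}), $J$ itself may be taken as a \emph{global} first action coordinate. Consequently every affine chart on $B_r$ has the form $(J,A)$ for a locally defined second action $A$, and any two such charts differ by an element of $\operatorname{Aff}(2,\Z)$ fixing the first coordinate and preserving orientation, i.e.\ by an element of $\mathcal{T}$ (linear part $T^b$, $b\in\Z$, composed with a vertical translation). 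This both forces any development to have the asserted form $(x,f^{(2)}_{\vec\epsilon}(x,y))$ and identifies $\mathcal{T}$ as the ambiguity group, which is what ultimately yields the uniqueness statement. I would also record the other local normal forms from Section~\ref{sec:Eliasson}: the transversally elliptic model governing the boundary of $B$, and the focus\--focus (Eliasson) model at each $c_i$, together with the key input that the affine monodromy of a small loop encircling $c_i$ equals $T^{k_i}$.

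Next I would cut and develop. The half\--lines $\ell_i^{\epsilon_i}$ are chosen precisely to sever every generator of the monodromy, so using (H.\ref{H-connected}) to control the topology of $B$ and $B_r$ I would verify that $B_r\setminus\ell^{\vec\epsilon}$ is simply connected. On this domain the affine structure has trivial holonomy, hence analytic continuation of a base chart is single\--valued and produces a smooth map $f_{\vec\epsilon}$ into $\mathbb{A}^2_{\mathbb{Z}}$ realizing the affine structure, which establishes (P.\ref{pii}); by the previous paragraph it has the form $(x,f^{(2)}_{\vec\epsilon})$, and, once the symplectic orientation is fixed, its Jacobian determinant is positive, giving (P.\ref{pi}).

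Then I would compute the jump and extend. Across $\ell_i^{\epsilon_i}$ the two one\--sided limits of $\op{d}\!f_{\vec\epsilon}$ differ exactly by the holonomy of a small transverse loop, namely $T^{k_i}$; when several $x_i$ coincide the contributions add, producing the stated factor $T^{k(c)}$ and hence (P.\ref{piii}). Since only the derivative jumps while the value itself extends continuously, $f_{\vec\epsilon}$ remains a homeomorphism across the cuts. It then remains to extend $f_{\vec\epsilon}$ continuously first across the transversally elliptic boundary of $B$ (upgrading the domain from $B_r\setminus\ell^{\vec\epsilon}$ to $B\setminus\ell^{\vec\epsilon}$) and then across the focus\--focus values $c_i$, in each case reading off the continuous extension from the normal form, and finally to check that the resulting global map is a homeomorphism onto its image. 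Uniqueness follows because two developments of the same affine structure differ by a global element of the ambiguity group, which the constraints of fixing $J=x$ and positive orientation reduce to a left composition by $\mathcal{T}$.

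The main obstacle, I expect, is precisely the step forced by the non\--properness of $J$. In \cite{VN2007} properness of $J$ permits global Morse theory and the Duistermaat\--Heckman theorem to control the topology of $B$ and the behaviour of the action variables at infinity; here both tools fail (Remark~\ref{DH}). One must therefore argue directly from the affine structure together with the connectedness hypothesis (H.\ref{H-connected}) in order to establish simple connectivity of the cut domain and, above all, to prove that the developed map extends to a genuine global homeomorphism of the possibly unbounded region $B$ rather than merely to a local diffeomorphism.
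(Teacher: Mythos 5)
Your overall route coincides with the paper's: use the effective $S^1$\--action to make $J$ a global first action coordinate so that all affine transitions lie in $\mathcal{T}$, develop the affine structure of the cut domain into $\mathbb{A}^2_{\mathbb{Z}}$, read the jumps \eqref{jump} off the focus\--focus monodromy, and extend over the elliptic boundary and the focus\--focus values. However, one step of your argument is false as stated. You claim that the half\--lines $\ell_i^{\epsilon_i}$ ``sever every generator of the monodromy'', so that $B_r\setminus\ell^{\vec\epsilon}$ is simply connected. This fails exactly when several focus\--focus values lie on the same vertical line, a case the theorem allows (the $x_i$ are only ordered non\--decreasingly). For instance, if $c_1=(0,0)$ and $c_2=(0,1)$, with $\epsilon_1=+1$, $\epsilon_2=-1$ and $k_1=k_2$, then on the overlap $\{0\}\times[0,1]$ of the two half\--lines the sum \eqref{equ:k} vanishes, so the overlap is not contained in $\ell^{\vec\epsilon}=k^{-1}(\Z\setminus\{0\})$ at all; loops representing commutators of the generators around $c_1$ and $c_2$ then survive in $B_r\setminus\ell^{\vec\epsilon}$, whose fundamental group is nontrivial (winding numbers vanish, but commutators do not). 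Single\--valuedness of your analytic continuation is therefore not automatic. The paper devotes a separate step (Step 5) to precisely this: it perturbs the cuts so the $x$\--coordinates become pairwise distinct, defines $G_{\vec\epsilon}(c)$ along paths in the perturbed cut domain, and proves independence of the choices because the monodromy representation is \emph{Abelian} --- a genuinely nontrivial input coming from the global $S^1$\--action \cite{cushman-vungoc}. Without it, your construction is not well defined in the overlapping case.

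Two further places are left as placeholders where the paper does concrete work. First, even in the non\--overlapping case you only flag simple connectivity of $B_r\setminus\ell^{\vec\epsilon}$ as ``the main obstacle''; the paper proves it using the structure results of \cite{PeRaVN2011}: $B$ is the region between the graphs of two continuous functions over an interval, and these graphs cannot meet over an interior point, since that would disconnect the interior of $B_r$, contradicting properness of $F$ (properness of $F$, not of $J$, is what is used throughout). Second, your closing step ``check that the resulting global map is a homeomorphism onto its image'' conceals the hardest analytic point, namely continuity of $(f_{\vec\epsilon})^{-1}$ at the images of the cuts and of the focus\--focus values. On the open cut the paper matches left and right continuations using that the affine monodromy fixes the vertical line pointwise; at $c_i$ it invokes the $z\log z$ asymptotics of the developing map \cite{VN2003}, which give a lower bound $\bigl|\partial_y f^{(2)}_{\vec\epsilon}\bigr|\geq C>0$ near $c_i$ and hence a continuous inverse up to $f_{\vec\epsilon}(c_i)$. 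Relatedly, global injectivity of the developed map (needed for (P.\ref{pi})) does not follow from its being a local diffeomorphism; it follows from the special form $(x,f^{(2)}_{\vec\epsilon})$ together with the non\--vanishing of $\partial_y f^{(2)}_{\vec\epsilon}$, an argument you should make explicit.
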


 In toric case, $f_{\vec{\epsilon}} (x,y) = (x,y)$, as was mentioned
 in Section~\ref{sec:summary}.

  \begin{definition}
    The map $f_{\vec \epsilon}$ in Theorem \ref{theo:polygon} is a
    \emph{cartographic map\footnote{since they lay out the affine
        structure of $F$ on two dimensions.}  for $F$} and its image
    $f_{\vec \epsilon}(B)$ is a \emph{cartographic projection of $F$}.
  \end{definition}

  \begin{definition}
    \label{def:type}
    Let $\mathcal{R}$ be a subset of $\mathbb{R}^2$. We say that
    $\mathcal{R}$ has \emph{type I} if there is a convex polygon
    $\Delta \subset \R^2$ and an interval $I \subseteq \R$ such that
$$\mathcal{R}=\Delta \cap \Big\{(x,y) \in \R^2\,\,|\, \, x \in I\Big\}.$$
We say that $R$ has \emph{type II} if there is an interval $I
\subseteq \R$ and $f\colon I \to \R$, $g \colon I \to \overline{\R}$
such that $f$ is piecewise linear, continuous, and convex, $g$ is
lower semicontinuous, and
$$\mathcal{R}=\Big\{(x,y) \in \R^2\,\,|\, \, x \in I\,\,\,\, {\rm and}\,\,\,\, f(x) \leq y < g(x)\Big\}.$$
We say that $\mathcal{R}$ has \emph{type III} if there is an interval
$I \subseteq \R$ and $f \colon I \to \overline{\R}$, $g \colon I \to
\R$ such that $f$ is upper semicontinuous, $g$ is piecewise linear
continuous and concave, and
$$\mathcal{R}=\Big\{(x,y) \in \R^2\,\,|\, \, x \in I \,\,\,\, {\rm and}\,\,\,\,  f(x) < y \leq g(x)\Big\}.$$
We say that $R$ has \emph{type IV} if there is an interval $I
\subseteq \R$ and $f,g \colon I \to \overline{\R}$ such that $f$ is
upper semicontinuous, $g$ is lower semicontinuous, and
$$\mathcal{R}=\Big\{(x,y) \in \R^2\,\,|\, \, x \in I \,\,\,\, {\rm and}\,\,\,\,  f(x) < y< g(x)\Big\}.$$
\end{definition}

In the following statement we call a \emph{discrete} sequence a
sequence such that for every value $c$, there is a neighborhood of $c$
which contains only the image of a finite number of indices.
\begin{theoremL} \label{theo:b} Let $F=(J,H) \colon M \to
  \mathbb{R}^2$ be a semitoric system and let $f_{\vec \epsilon}$ be a
  cartographic map for $F$.  Let
$$
K^{+}:=\Big\{x \in J(M) \mid \,\,\,\, J^{-1}(x) \cap
H^{-1}([0,+\infty)) \,\, {\rm is \,\, compact} \Big\}.
$$
and
$$
K^{-}:=\Big\{x \in J(M) \mid \,\,\,\, J^{-1}(x) \cap
H^{-1}((-\infty,0]) \,\, {\rm is \,\, compact} \Big\}.
$$
Suppose that the topological boundaries $\partial K^{+}$ and $\partial
K^{-}$ in $J(M)$ are discrete.  Then there exists an increasing
sequence $(x_j)_{j \in \Z}$ in $\mathbb{R}$, and sets
$\mathcal{C}^{\vec \epsilon}_j \subset \R^2$, $j \in \Z$, such that:
\begin{enumerate}[{\rm (P.1)}]
\item \label{bp1} for each $j \in \Z$, the set $\mathcal{C}^{\vec
    \epsilon}_j$ has type I, II, III, or IV associated to
  $(x_j,x_{j+1})$;
\item \label{bp2} $f_{\vec \epsilon} (B)=\bigcup_{j \in \Z}
  \mathcal{C}^{\vec \epsilon}_j$;
\item \label{bp4} for every $j \in \Z$, and every regular value $x$ of
  $J$, the volume $V(x)\leq +\infty$ of $J^{-1}(x)$ is equal to the
  Euclidean length of the vertical line segment $(\{x\} \times \R)
  \cap \mathcal{C}^{\vec \epsilon}_j$.
\end{enumerate}
\end{theoremL}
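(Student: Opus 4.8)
The plan is to read off everything from the two defining features of a cartographic map $f:=f_{\vec\epsilon}$ in Theorem~\ref{theo:polygon}: it preserves the first coordinate, $f(x,y)=(x,f^{(2)}(x,y))$, and on the regular set it transports the integral affine structure of $B_r$ to the standard one with positive Jacobian (P.\ref{pii}). Consequently, for a regular value $x$ of $J$ the fiber $J^{-1}(x)$ is carried by $f\circ F$ into the vertical line $\{x\}\times\R$, its image is a (possibly half\--infinite or bi\--infinite) segment, and the second cartographic coordinate restricts along the reduced space $J^{-1}(x)/S^1$ to the conjugate $H$\--action variable. The Euclidean length of this segment is therefore the reduced volume $V(x)$, which gives (P.\ref{bp4}); this identity persists with the value $+\infty$ on both sides because properness (H.\ref{H-proper}) makes $F^{-1}(\{x\}\times[a,b])$ compact for finite $a,b$, so the action integral diverges exactly when the fiber is noncompact in the corresponding $H$\--direction.

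The next step is to translate the two compactness conditions into finiteness of the boundary functions. Writing $h_{\min}(x)\le h_{\max}(x)$ for the endpoints of the image segment, properness yields $h_{\max}(x)<+\infty\iff x\in K^{+}$ and $h_{\min}(x)>-\infty\iff x\in K^{-}$: if $x\in K^{+}$ then $H$ is bounded above on the fiber, and conversely if $h_{\max}(x)<+\infty$ then $J^{-1}(x)\cap H^{-1}([0,h_{\max}(x)])=F^{-1}(\{x\}\times[0,h_{\max}(x)])$ is compact; the argument for $K^{-}$ is symmetric. I then define $(x_j)_{j\in\Z}$ as an increasing enumeration of the set $\partial K^{+}\cup\partial K^{-}\cup\{\text{bifurcation values of }J\}$, which is discrete by (H.\ref{H-connected}) and the standing hypothesis on $\partial K^{\pm}$ (padding with dummy points, if necessary, to make the sequence bi\--infinite). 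On each open slab $(x_j,x_{j+1})$ the membership of $x$ in $K^{+}$ and in $K^{-}$ is constant, so the four Boolean possibilities partition the slabs; I assign to each the candidate type (both halves compact $\to$ I, only the lower compact $\to$ II, only the upper compact $\to$ III, neither $\to$ IV) and set $\mathcal C^{\vec\epsilon}_j:=f(B)\cap\bigl((x_j,x_{j+1})\times\R\bigr)$, apportioning each cut line $x=x_j$ to one adjacent slab.

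It then remains to verify that each $\mathcal C^{\vec\epsilon}_j$ genuinely has the asserted type, which is the analytic core of the argument. Where a half is compact, its boundary is swept out by rank\--one elliptic values on $\partial B$; by (P.\ref{pii}) these are carried to piecewise affine arcs, and the local convexity of the momentum image at elliptic singularities, together with the convex corner produced by a focus\--focus cut line $\ell^{\epsilon_i}_i$ via the monodromy $T^{k(c)}$ of (P.\ref{piii}), shows that the lower boundary is continuous, piecewise linear and convex while the upper boundary is continuous, piecewise linear and concave, as demanded by types I, II, III. For a noncompact half the free boundary equals $\pm\infty$ on the open slab, and the required lower/upper semicontinuity is obtained from the monotone limiting behaviour of $V(\cdot)$ as $x\to x_j$, where a finite value can be attained only in the limit. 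Finally (P.\ref{bp2}) follows because, fiber by fiber, $f(B)\cap(\{x\}\times\R)$ is exactly the $f^{(2)}$\--image of $J^{-1}(x)$, so the slabs reassemble to $f(B)$ once the cut lines are apportioned consistently with the open/closed conventions built into Definition~\ref{def:type}.

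I expect the main obstacle to be precisely this last verification of convexity/concavity and of one\--sided semicontinuity in the singular cartographic coordinates: since the Duistermaat\--Heckman theorem and ordinary Morse theory break down for nonproper $J$ (Remark~\ref{DH}), the usual global arguments are unavailable, and one must control the boundary functions and their limits slab by slab, handling simultaneously the elliptic edges, the focus\--focus corners, and the escape of mass to infinity, and then match the open/closed conventions so that (P.\ref{bp2}) is an exact equality rather than an equality only up to boundary.
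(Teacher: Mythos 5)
Your skeleton matches the paper's proof: the paper's Step~1 decomposes $J(M)$ into the four disjoint pieces determined by membership in $K^{+}$ and $K^{-}$ (your Boolean partition of slabs), its Step~2 assigns types I--IV to the four cases, quoting \cite[Theorem~3.8]{VN2007} for the elliptic/piecewise-linear/convex boundaries just as you do, and its Step~3 proves (P.\ref{bp4}) by the same action--angle argument you sketch (the push-forward by $A_2=f^{(2)}_{\vec\epsilon}(J,H)$ of the Liouville measure on $J^{-1}(x)$ has density $1$ against ${\rm d}A_2$).

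The genuine gap is in your treatment of the noncompact halves, i.e.\ precisely the verification of types II, III and IV. You claim that over an open slab where a half of the fiber is noncompact ``the free boundary equals $\pm\infty$''. This is false, and its failure is the central phenomenon of the nonproper-$J$ setting: a noncompact fiber $J^{-1}(x)$ can have \emph{finite} volume, so by (P.\ref{bp4}) the vertical segment $(\{x\}\times\R)\cap\mathcal{C}^{\vec\epsilon}_j$ has finite length and the free boundary $h^{\vec\epsilon}_{+}(x)=\lim_{y\to+\infty}f^{(2)}_{\vec\epsilon}(x,y)$ is finite. This is exactly what happens in Remark~\ref{DH} (reduced volume $2\pi(1+f(x))$ with $f$ an arbitrary continuous function) and in Theorem~\ref{niceexample}, property (E.\ref{d3}), where $B_{\lambda}$ is unbounded while $f_{\lambda,\vec\epsilon}(B_{\lambda})$ is bounded. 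For the same reason your equivalence ``$h_{\max}(x)<+\infty\iff x\in K^{+}$'' is wrong in the direction you need when $h_{\max}$ denotes the endpoint of the cartographic segment (it is correct only for $\sup_{J^{-1}(x)}H$, via properness of $F$). Consequently the free boundaries $g$ of Definition~\ref{def:type} are genuinely finite-valued functions in general, and the analytic core of the theorem is to prove that they are lower (resp.\ upper) semicontinuous. Your substitute---``monotone limiting behaviour of $V(\cdot)$''---does not work: $V$ need not be monotone (Remark~\ref{DH} again), and $h^{\vec\epsilon}_{+}$, being a pointwise limit of continuous functions, is a priori only continuous on a dense set, which is not semicontinuity. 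The paper closes exactly this gap with a tool absent from your proposal: a continuous-map variant of \cite[Theorem~5.2]{PeRaVN2011}, applied to $f_{\vec\epsilon}\circ F=\left(J,\,f^{(2)}_{\vec\epsilon}(J,H)\right)$, whose fiberwise sup and inf of the second component are precisely $h^{\vec\epsilon}_{\pm}$; this yields the required semicontinuity. Without this (or an equivalent) argument, (P.\ref{bp1}) is not proved.
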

In some cases (for instance if $M$ is compact), only a finite number
of the $x_j$'s are relevant.

Suppose that $F: M \rightarrow \mathbb{R}^2$ is the momentum map of a
Hamiltonian $\mathbb{T}^2$-action on a compact connected symplectic
4-manifold. Then the cartographic projection of $F$ is a compact
convex polygon in $\mathbb{R}^2$; see \cite{At1982} and
\cite{GuSt1982}. If $F:M \rightarrow \mathbb{R}^2$ is a semitoric
system for which $J$ is proper, then any cartographic projection of
$F$ is a convex polygon in $\mathbb{R}^2$, which may be bounded or
unbounded, and which is always a closed subset of $\mathbb{R}^2$; see
\cite[Theorem 3.8]{VN2007}.

  \begin{example}
    Figure~\ref{fig:cartographic2} shows the regular and singular
    focus\--focus fibers of singular Lagrangian fibration $f \circ F
    \colon M \to \bigcup_{j \in \Z} \mathcal{C}^{\vec \epsilon}_j$ in
    Theorem~\ref{theo:b}. There are two focus\--focus singular fibers,
    $F^{-1}(c_i)$, $i=1,2$.  The value $c_1$ has multiplicity $k_1=2$
    and $c_2$ has multiplicity $k_2=3$.
  \end{example}

  \begin{remark} \label{DH} Concerning
    Theorem~\ref{theo:b}(P.\ref{bp4}), note that the
    Duistermaat\--Heckman theorem does not hold for nonproper momentum
    maps. Indeed, let $M=S^2\times S^2$ with $F=(z_1,z_2)$ (toric
    momentum map). Let $f \colon [-1,1]\to (-1,1]$ be continuous. Let
    $M'=F^{-1}(\{(x,y) \,|\, x \in [-1,1],\,\, y<f(x)\}$. The set $M'$
    is an open subset of $M$, and $\mu=z_1$ is a momentum map for a
    Hamiltonian $S^1$\--action on $M'$. Furthermore, $\mu$ is not
    proper because $ \mu^{-1}(x)=F^{-1}(\{(x,y)\,\,|\,\, y<f(x)\}) $
    is not closed. Now let $ V(x)$ be the symplectic volume of $M'_x$
    where $M'_x=M'\cap \mu^{-1}(x)/S^1=S^2 \cap \{z_2<f(x)\}.$ (See
    Figure~\ref{fig:sph}.) Then $V(x)={\rm vol}(S^2) \,\,
    [(1+f(x))/2]=2\pi(1+f(x))$.  So $V(x)$ is not piecewise linear in
    general, in contrast with Duistermaat\--Heckman \cite{DH82}. This
    shows that the Duistermaat-Heckman theorem may not hold when the
    $S^1$-momentum map is not proper. Notice that the full map
    $F_{\restr M}$ is not proper, but we can easily modify it as
    follows.  Let $g(x,y)=(x,1/(f(x)-y))$. Then $F'=g \circ F|_{M'}$
    is proper, and the $S^1$-momentum map is not modified. Thus $F'$
    is a generalized semitoric system.
    \begin{figure}[h]
      \centering
      \includegraphics[width=0.4\textwidth]{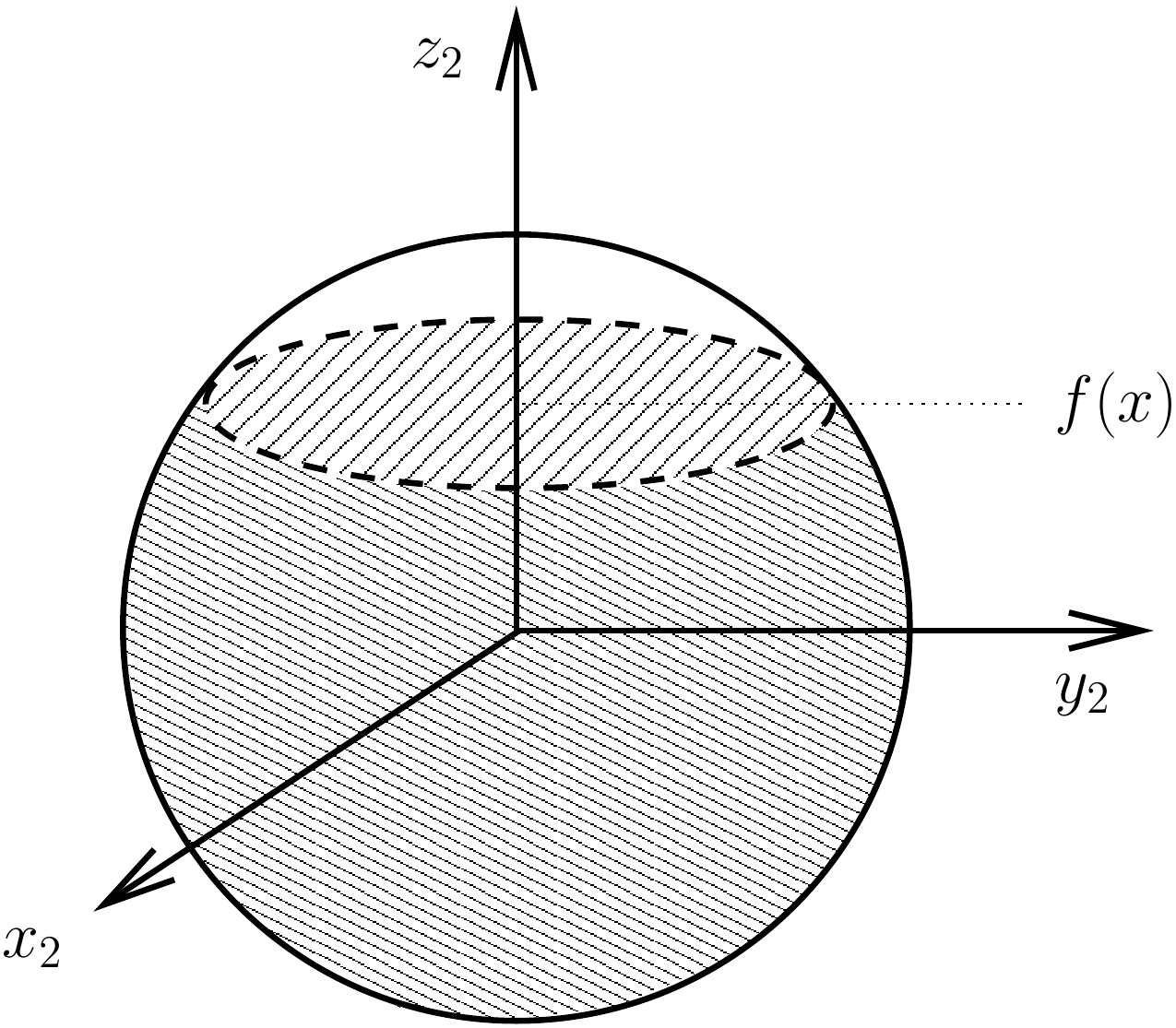}
      \caption{The reduced manifold $M'_x$.}
      \label{fig:sph}
    \end{figure}
  \end{remark}

  In the following definition, we use the terminology of
  sections~\ref{sec:toric-semitoric} and~\ref{sec:summary}.
  \begin{definition}
    Let $\mathcal{C}_{\rm GST}(F)$ be defined as follows. Let $\vec
    \epsilon = (\epsilon_i)_{i\in Z}$ with $\epsilon_i=1$ for all
    $i\in Z$. Then
    \begin{eqnarray} \label{deltaf} \mathcal{C}_{\rm GST}(F):=(f_{\vec
        \epsilon} (B) \,\,\, {\rm mod}\,\mathcal{T}) \mod \{-1,1\}^Z
      \in \mathfrak{B}_{\rm GST}.
    \end{eqnarray}
  \end{definition}
  \medskip

  \begin{theoremL} \label{theo:c}
    \label{prop:group}
    The map $\mathcal{C}_{\rm GST}:\mathcal{M}_{\rm GST}\to
    \mathfrak{B}_{\rm GST}$ is a cartographic invariant.
  \end{theoremL}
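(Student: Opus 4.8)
The plan is to check the two conditions in the definition of a cartographic invariant: that $\mathcal{C}_{\rm GST}$ extends $\mathcal{C}_{\rm T}$ of~\eqref{p2}, and that it is invariant under the isomorphisms of Definition~\ref{def:iso}. The extension is immediate: if $(M,\omega,F)\in\mathcal{M}_{\rm T}$ then $F$ has no focus\--focus values, so $Z=\varnothing$, the tuple $\vec\epsilon$ is empty and there are no cutting lines; by the remark following Theorem~\ref{theo:polygon} the cartographic map is then the identity, so $f_{\vec\epsilon}(B)=B=F(M)$ and $\mathcal{C}_{\rm GST}(F)=(F(M)\ \mathrm{mod}\ \mathcal{T},\varnothing,\varnothing)=\mathcal{C}_{\rm T}(F)$. (The same uniqueness argument shows $\mathcal{C}_{\rm GST}$ restricts to $\mathcal{C}_{\rm ST}$ on $\mathcal{M}_{\rm ST}$, since when $J$ is proper the defining properties of Theorem~\ref{theo:polygon} are exactly those of V\~u Ng\d oc's polygon; this is what makes the diagram~\eqref{mainlemma} commute.)

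For isomorphism invariance, let $(M_1,\omega_1,F_1)$ and $(M_2,\omega_2,F_2)$ be isomorphic through a symplectomorphism $\varphi\colon M_1\to M_2$ with $\varphi^*(J_2,H_2)=(J_1,h(J_1,H_1))$, $\partial h/\partial H_1>0$. First I would pass to the bases. Since $F_2\circ\varphi=(J_1,h\circ F_1)$ factors through $F_1$, the formula $G(x,y):=(x,h(x,y))$ defines a homeomorphism $G\colon B_1\to B_2$ with $G\circ F_1=F_2\circ\varphi$; it fixes the first coordinate, has $\det\op{d}G=\partial h/\partial H_1>0$, and carries each fiber $F_1^{-1}(c)$ onto $F_2^{-1}(G(c))$. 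Consequently $G$ matches all the combinatorial data of the two systems: it preserves the ordered $J$\--images of the focus\--focus values, so $\vec x_1=\vec x_2$ and the index sets $Z$ coincide; it carries focus\--focus fibers to focus\--focus fibers of the same cardinality, so $\vec k_1=\vec k_2$; and it maps the half\--lines $\ell_i^{\epsilon_i}$ of $F_1$ onto those of $F_2$, since these lie over the common abscissae $x_i$.

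The key step is to show that, for each fixed $\vec\epsilon$, the composite $f^{2}_{\vec\epsilon}\circ G$ is again a cartographic map for $F_1$, after which the uniqueness clause of Theorem~\ref{theo:polygon} finishes the argument. Here I would use that $\varphi$, being a symplectomorphism taking the regular Lagrangian fibration of $F_1$ to that of $F_2$, sends action variables to action variables; hence $G$ is an isomorphism of the integral affine structures on the regular parts of $B_1$ and $B_2$ (the intrinsic character of this structure is recalled in Section~\ref{sec:affine}). Composing with $f^{2}_{\vec\epsilon}$, which flattens the affine structure of $B_2$ to the standard one, then yields property~(P.\ref{pii}); positivity of the Jacobian~(P.\ref{pi}) follows from $\det\op{d}G>0$; and because $G$ is a genuine (uncut) diffeomorphism, smooth across each $\ell_i^{\epsilon_i}\setminus\{c_i\}$, the one\--sided differentials of $f^{2}_{\vec\epsilon}\circ G$ there are the one\--sided differentials of $f^{2}_{\vec\epsilon}$ right\--multiplied by the continuous $\op{d}G$, so the jump relation~\eqref{jump} with its factor $T^{k(c)}$ is inherited verbatim, giving~(P.\ref{piii}).

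Granting this, Theorem~\ref{theo:polygon} provides $\tau\in\mathcal{T}$ with $f^{1}_{\vec\epsilon}=\tau\circ f^{2}_{\vec\epsilon}\circ G$, whence $f^{1}_{\vec\epsilon}(B_1)=\tau(f^{2}_{\vec\epsilon}(G(B_1)))=\tau(f^{2}_{\vec\epsilon}(B_2))$; that is, the two cartographic projections agree in $\mathcal{P}^{\mathcal{T}}(\R^2)$. Together with $\vec x_1=\vec x_2$, $\vec k_1=\vec k_2$, and after passing to the residual $\{-1,1\}^Z$\--quotient, this gives $\mathcal{C}_{\rm GST}(F_1)=\mathcal{C}_{\rm GST}(F_2)$. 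I expect the main obstacle to be the third paragraph: justifying cleanly that an isomorphism of (generalized) semitoric systems preserves the integral affine structure on the regular set, so that $G$ is genuinely affine, and verifying that the behaviour of $\op{d}G$ across the singular cutting lines is compatible with~\eqref{jump}. Once these facts are secured, the uniqueness statement of Theorem~\ref{theo:polygon} reduces everything to the bookkeeping of the two tuples $\vec x$ and $\vec k$.
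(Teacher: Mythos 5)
Your proposal is correct and takes essentially the same route as the paper's own (much terser) proof: the paper likewise observes that isomorphic systems have identified leaf spaces carrying the same induced integral affine structure, and then invokes property (P.\ref{pii}) together with the uniqueness clause of Theorem~\ref{theo:polygon} to produce $t\in\mathcal{T}$ with $f_{\vec\epsilon,1}=t\cdot f_{\vec\epsilon,2}$, after which the claim follows from the definition~\eqref{deltaf}. Your write-up just makes explicit what the paper leaves implicit --- the induced base map $G(x,y)=(x,h(x,y))$, the matching of $\vec x$ and $\vec k$, the affine-structure and jump-condition verifications showing $f_{\vec\epsilon,2}\circ G$ is a cartographic map for $F_1$, and the toric extension property.
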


  \begin{proof}
    Let $F_1 \colon M_1 \to \mathbb{R}^2$ and $F_2\colon M_2 \to \R^2$
    be semitoric systems, and let $f_{\vec\epsilon,1}$,
    $f_{\vec\epsilon,2}$ be the corresponding cartographic maps
    defined by Theorem~\ref{theo:polygon}. If $F_1$ and $F_2$ are
    isomorphic, they have the same leaf space, with identical induced
    integral affine structures. Thus, from Theorem~\ref{theo:polygon},
    (P.\ref{pii}), there must be a transformation $t\in\mathcal{T}$
    such that $f_{\vec\epsilon,1}= t \cdot f_{\vec\epsilon,2}$. Then
    the result follows from~\eqref{deltaf}.
  \end{proof}

  We conclude with a result which shows that there are semitoric
  systems with a cartographic projection which may not occur as the
  cartographic projection of a toric or semitoric system $(J,H):M \to
  \mathbb{R}^2$ with proper $J$.

  \begin{theoremL} \label{niceexample} There exists an uncountable
    family of semitoric integrable systems $\{F_{\lambda}: M \to
    \mathbb{R}^2\}_{\lambda}$, with cartographic map $f_{\lambda,\vec
      \epsilon}$\,, such that the following properties hold:
    \begin{enumerate}[{\rm (E.1)}]
    \item \label{d1} $F_{\lambda} \colon M \to \mathbb{R}^2$ is
      proper;
    \item \label{d2}$B_{\lambda}:=F_{\lambda} (M)$ is unbounded in
      $\mathbb{R}^2$;
    \item \label{d3}$f_{\lambda,\vec \epsilon} (B_{\lambda})$ is a
      bounded in $\R^2$;
    \item \label{d4} $B_{\lambda}$ is not a convex region;
    \item \label{d5}$B_{\lambda}$ is not open and is not closed in
      $\R^2$;
    \item \label{d6} $F_{\lambda}$ is isomorphic to $F_{\lambda'}$ if
      and only of $\lambda=\lambda'$;
    \item \label{d7} for every $i \in \{I,II,III,IV\}$ there exists
      $\lambda$ such that $f_{\lambda,\vec \epsilon} (B)$ as in
      Theorem {\rm \ref{theo:b}{\rm (}P.\ref{pii}{\rm )}}, is a union
      of regions in $\R^2$ of types I, II, III, and IV, in which at
      least one of them has type $i$.
    \end{enumerate}
  \end{theoremL}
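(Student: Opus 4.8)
The plan is to promote the single example of Remark~\ref{DH} to an uncountable family by letting the cutting profiles vary, and to place four different cutting behaviours side by side so that all four region types of Definition~\ref{def:type} occur simultaneously. Fix the compact toric model $(M_0,\omega_0,F_0=(z_1,z_2))$ with $M_0=S^2\times S^2$, so that $F_0(M_0)=[-1,1]^2$ carries the standard integral affine structure and has no focus--focus values (hence $Z=\varnothing$). I partition $[-1,1]$ into four consecutive subintervals and prescribe on them, respectively: no cut; a cut from above by a profile $b_\lambda(x)<1$; a cut from below by a profile $a_\lambda(x)>-1$; and a cut from both sides. This determines an open region $R_\lambda\subset[-1,1]^2$, an open submanifold $M_\lambda:=F_0^{-1}(R_\lambda)$, and a fibre--preserving map $F_\lambda:=G_\lambda\circ F_0|_{M_\lambda}$, where $G_\lambda(x,y)=(x,\phi_\lambda(x,y))$ is a diffeomorphism whose vertical component, weighted by a smooth cut--off vanishing off the cut intervals, sends each cut edge to $\pm\infty$ exactly as $g$ does in Remark~\ref{DH}. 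Since $G_\lambda$ leaves $J=z_1$ untouched, each $F_\lambda$ is a semitoric system in $\mathcal{M}_{\rm GST}(\varnothing)$ satisfying (H.\ref{Hii})--(H.\ref{H-connected}) with only elliptic singularities.

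Next I would read off (E.\ref{d1})--(E.\ref{d4}) and (E.\ref{d7}). Properness (E.\ref{d1}) and unboundedness (E.\ref{d2}) are immediate: a compact subset of the image stays bounded away from the lines at infinity, so its $G_\lambda^{-1}$--preimage is a compact subset of $R_\lambda$ bounded away from the cut edges, hence $F_0$--compact, while the image blows up along those edges. For the cartographic projection (E.\ref{d3}) the point is that $F_\lambda$ carries the \emph{same} standard affine structure on its regular set as $F_0$, because $G_\lambda$ only reparametrizes the $H$--direction fibrewise. Thus $G_\lambda^{-1}$ is a homeomorphism of the form $(x,y)\mapsto(x,\cdot)$ straightening this structure to that of $\mathbb{A}^2_{\mathbb{Z}}$, so it is itself a cartographic map and, by the uniqueness clause of Theorem~\ref{theo:polygon}, agrees modulo $\mathcal{T}$ with any $f_{\lambda,\vec\epsilon}$; hence $f_{\lambda,\vec\epsilon}(B_\lambda)=R_\lambda$ modulo $\mathcal{T}$, which is bounded. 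Choosing $b_\lambda$ non--concave makes the blown--up image non--convex (E.\ref{d4}). Finally, the four prescribed cutting behaviours realize exactly the four possible patterns of the compactness sets $K^{+},K^{-}$ of Theorem~\ref{theo:b}; taking the relevant profiles nowhere affine forces the four subintervals to carry regions of types I, II, III and IV respectively, which gives (E.\ref{d7}).

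For (E.\ref{d5}) I would examine the breakpoints of $\partial K^{\pm}$ where a cut turns on. At such a junction the corresponding pole is deleted from the fibre, so the finite edge of $B_\lambda$ there is \emph{not} attained, while the cut--off weight still vanishes, so that edge is \emph{not} yet pushed to infinity; the resulting finite limit point is missing, whence $B_\lambda$ is not closed, whereas the boundary segment attained over the uncut interval shows it is not open. The substance of the argument is (E.\ref{d6}). By Theorem~\ref{theo:c}, $F_\lambda$ and $F_{\lambda'}$ are isomorphic iff their cartographic projections agree in $\mathfrak{B}_{\rm GST}$, i.e.\ iff $R_\lambda=t\cdot R_{\lambda'}$ for some $t\in\mathcal{T}$; since $Z=\varnothing$ such a $t$ can only add an integer--slope linear function and a constant to a boundary graph. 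I would therefore encode $\lambda$ as the scale of a fixed smooth, nowhere--affine bump placed in one profile over a subinterval: an identity $\lambda\beta(x)=\lambda'\beta(x)+kx+c$ would force $(\lambda-\lambda')\beta$ to be affine, hence $\lambda=\lambda'$. This rigidity yields an uncountable, pairwise non--isomorphic family.

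I expect the main obstacle to be the simultaneous engineering of $G_\lambda$: it must be globally smooth and a diffeomorphism onto its image with positive Jacobian, reduce to the identity (flat slab) over the uncut interval, blow up along each cut edge over the cut intervals, and transition smoothly across the junctions, all while $M_\lambda$ remains a genuine open symplectic submanifold. The delicate balance is that the cut--off weight must vanish to sufficiently high order at each junction to keep $G_\lambda$ smooth, yet this is precisely what produces the missing finite limit point needed for (E.\ref{d5}); reconciling these two requirements, together with verifying that each region is genuinely of the claimed type and not accidentally of a simpler one, is where the real care lies.
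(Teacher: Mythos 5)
Your proposal takes a genuinely different route from the paper (the paper fixes $M=S^2\times S^2\setminus\{z_1=0,\,z_2\geq 0\}$ and varies the \emph{symplectic form} $\omega=\omega_1+f_\chi\,\omega_2$, separating the family by reduced volumes via Theorem~\ref{theo:b}(P.\ref{bp4}); you fix the product form and vary the cut region), but your central step fails, and it fails for a structural reason, not for lack of engineering. First, a general obstruction: a proper continuous map into $\R^2$ is a closed map, so its image is closed. Hence a ``missing finite limit point'' of $B_\lambda$ --- the very mechanism you invoke for (E.\ref{d5}) --- is a certificate that $F_\lambda$ is \emph{not} proper; your (E.\ref{d1}) and your argument for (E.\ref{d5}) cannot coexist for the same map. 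Concretely, in your design there are only two cases at a junction $x_0$ where a cut turns on. Either the removed set $[-1,1]^2\setminus R_\lambda$ fails to be closed there, in which case $M_\lambda=F_0^{-1}(R_\lambda)$ is not an open subset of $S^2\times S^2$ and is not a symplectic manifold at all; or it is closed, which forces the junction slice of $R_\lambda$ to be $\{x_0\}\times[-1,1)$, and then continuity plus your requirement that $G_\lambda$ be the identity over the uncut slab forces $G_\lambda=\mathrm{id}$ on that slice. In the latter case the entire fiber $J^{-1}(x_0)\cong S^1\times\bigl(S^2\setminus\{N_2\}\bigr)$, which is closed and non-compact in $M_\lambda$, is mapped by $F_\lambda$ into the compact set $\{x_0\}\times[-1,1]$, so $F_\lambda^{-1}\bigl(\{x_0\}\times[-1,1]\bigr)$ is not compact and (E.\ref{d1}) fails. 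No vanishing order of the cut-off repairs this: the ``delicate balance'' you defer to the end is an impossibility, not a technical difficulty.

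For contrast, the paper avoids both problems by never patching: the removed set is closed, and properness is obtained from a single global fiber-preserving map $g(z_1,z_2)=\bigl(z_1,\,(z_2+2)/(z_1^2+h(z_2))\bigr)$ whose denominator vanishes \emph{exactly} on the removed set, so that preimages of compact sets stay uniformly away from it; the price is that $g$ is nowhere the identity, the blow-up is felt on neighboring slices, and the image of the proper map $\widetilde F$ is then closed --- consistently with properness --- so the non-closedness in (E.\ref{d5}) is only ever verified in the paper for the unmodified, non-proper $F=(z_1,z_2)$, not for $\widetilde F$. (This tension is in the statement itself; your proposal is the one that claims to resolve it, which is why the contradiction surfaces as a gap in your argument.) Two further points: the theorem asks for one fixed $M$, whereas your $M_\lambda$ varies with $\lambda$ (the paper varies $\omega$, not $M$); and your separation mechanism for (E.\ref{d6}) --- a nowhere-affine bump in a boundary profile, rigid modulo $\mathcal{T}$ by Theorem~\ref{theo:c} --- is sound in principle and genuinely different from the paper's volume argument, but it only becomes a proof after the construction is rebuilt along the paper's lines: closed removed set, no identity slab, global blow-up, and (E.\ref{d5}) handled by some means other than a missing finite limit point.
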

  
  Motivated by \cite{PeVN2010}, the following inverse type question is
  natural.  \emph{Let $C:=\cup_{j \in\mathbb{N}}C_j$ be a connected
    set, where $C_j \subset \mathbb{R}^2$ is a region of type I, II,
    III, or IV. Does there exist a semitoric system $F:M \rightarrow
    \mathbb{R}^2$ with $B:=F(M)$ such that $f_{\vec{\epsilon}}(B) =
    C$, where $f_{\vec{\epsilon}}$ is a cartographic map for $F$ ?  }

  The classifications of Delzant~\cite{Delzant1988} and
  \cite{PeVN2010} give partial answers to this question.  Note that
  here we are not claiming uniqueness; in fact, it follows
  from~\cite{PeVN2010} that there are many semitoric systems which
  realize the same $C$.

  \section{Proof of Theorem \ref{theo:polygon}} \label{sec:a}

  The proof is close to \cite{VN2007}, but our construction is more
  transparent thanks to the use of a recent result in
  \cite{PeRaVN2011}.  Let $\Sigma_J$ be the bifurcation set of $J$.
  We fix a point $q_0=(x_0,y_0) \in B_r$, such that $x_0 \notin
  \Sigma_J$. Since the fibers of $J$ are connected by
  (H.\ref{H-connected}), we know from \cite[Theorem~4.7]{PeRaVN2011}
  that the fibers of $F$ are also connected. By the
  Liouville-Mineur-Arnold Theorem (see, \cite[Appendix A2]{HoZe1994}),
  there exists a diffeomorphism $g:U\subset B_r \rightarrow g(U)
  \subset \mathbb{R}^2$, with positive Jacobian determinant, defined
  on an open neighborhood $U$ of $q_0$ which, without loss of
  generality, we may assume to be simply connected, such that $
  A=(A_1,A_2)=g \circ F $ are local action variables. Since $J$ is the
  momentum map of an effective Hamiltonian $S^1$-action, it has to be
  free on the regular fibers (see for instance~\cite[Theorem
  2.8.5]{DK00}). Hence, we may assume $A_1= J$ (see \cite[point 2 of
  the proof of Theorem 3.8]{VN2007}). Repeating this argument with an
  open cover of $B_r$, we may fix an affine atlas of $B_r$ such that
  all transition functions belong to the group $\mathcal{T}$ (see
  Section~\ref{sec:groupT}).

  We divide the proof into four steps: the first four treat the
  generic case in which the lines in $\ell^{\vec{\epsilon}}$ are
  pairwise distinct, whereas the last step deals with the
  non\--generic case.  We warn the reader that statements
  (P.\ref{pi})--(P.\ref{piii}) are proven in the first three steps,
  but the claim that $f_{\vec \epsilon}$ is a homeomorphism onto its
  open image is proven in Step 4.

  The homeomorphism $f_{\vec{\epsilon}}$ with the required properties
  is constructed from the developing map of the universal cover
  $p_r:\widetilde{B}_r\rightarrow B_r$, chosen with $q_0$ as base
  point (see Section~\ref{sec:affine}).  Let $\widetilde{G}_\epsilon:
  \widetilde{B}_r \rightarrow \mathbb{R}^2$ be the unique developing
  map such that $\widetilde{G}_\epsilon([\gamma])=g(\gamma(1))$ for
  paths $\gamma$ contained in $U$, such that $\gamma(0) = q_0$. The
  goal is to use $\widetilde{G}_\epsilon$ in order to extend $g$ to
  the whole image $B=F(M)$.

  We begin the proof by assuming that the half lines in $\ell^{\vec
    \epsilon}$ do not overlap.

  \paragraph{{\bf Step 1}} (\emph{$B_r\setminus \ell^{
      \vec{\epsilon}}$ is simply connected}).  By
  \cite[Theorem~4.7]{PeRaVN2011}, $B$ is a region of $\R^2$ which is
  between the graphs of two continuous functions defined on the same
  interval. These graphs cannot intersect above an interior point of
  the interval, because this would imply that the interior of $B_r$ is
  not connected, which is known to be false because $F$ is proper (see
  \cite[Theorem~3.6]{PeRaVN2011}). This proves that $B_r\setminus
  \ell^{\vec \epsilon}$ is simply connected.

  \paragraph{{\bf Step 2}} (\emph{Proof of {\rm (P.\ref{pi})} and {\rm
      (P.\ref{pii})} on $B_r \setminus \ell^{\vec \epsilon}$}).
  Hence, the developing map $\widetilde{G}_{\vec \epsilon}:
  \widetilde{B}_r \rightarrow \mathbb{R}^2$ induces a unique affine
  map $G_{\vec \epsilon}:B_r \setminus \ell^{\vec{\epsilon}}
  \rightarrow \mathbb{R}^2$ by the relation
$$G_{\vec{\epsilon}}\circ p_r:=\widetilde{G}_{\vec{\epsilon}},$$
i.e., if $c\in B_r \setminus \ell^{\vec{\epsilon}}$ and $\gamma$ is a
smooth path in $B_r \setminus \ell^{\vec{\epsilon}}$ connecting $q_0$
to $c$, then $G_{\vec{\epsilon}}(c):=
\widetilde{G}_{\vec{\epsilon}}([\gamma])$.  Note that
$G_{\vec{\epsilon}}|_U = g$.

The definition implies that $G_{\vec{\epsilon}}$ is a local
diffeomorphism.  We show now that $G_{\vec{\epsilon}}$ is
injective. Since $A_1=J$, $G_{\vec{\epsilon}}|_U$ is of the form
$G_{\vec{\epsilon}}(x,y)=(x,h^U_{\vec{\epsilon}}(x,y))$ for some
smooth function $h^U_{\vec \epsilon}: U \rightarrow
\mathbb{R}$. Because we have an affine atlas of $B_r$ with transition
functions in $\mathcal{T}$, the affine map $G_{\vec \epsilon}$ must
preserve the first component $x$, i.e.  there exists a smooth function
$h_{\vec{\epsilon}}: B_r \setminus \ell^{\vec{\epsilon}} \rightarrow
\mathbb{R}$, extending $h^U_{\vec{\epsilon}}$ such
that $$G_{\vec{\epsilon}}(x,y) = (x, h_{\vec{\epsilon}}(x,y))$$ for
all $(x,y) \in B_r \setminus \ell^{\vec{\epsilon}}$. Since
$G_{\vec{\epsilon}}$ is a local diffeomorphism, $\frac{\partial
  h_{\vec \epsilon}}{\partial y}$ never vanishes, which implies that
for each fixed $x$, all the maps $y \mapsto h_{\vec{\epsilon}}(x,y)$
are injective. Hence $G_{\vec{\epsilon}}$ is injective and thus a
global diffeomorphism $B_r\setminus \ell^{\vec{ \epsilon}} \rightarrow
G_{\vec{\epsilon}}\left(B_r\setminus \ell^{\vec{\epsilon}}\right)
\subset \mathbb{R}^2$.

This proves (P.\ref{pi}) on $B_r \setminus \ell^{\vec \epsilon}$ by
choosing $f_{\vec{\epsilon}}:= G_{\vec{\epsilon}}$ and (P.\ref{pii})
because $G_{\vec{\epsilon}}$ is an affine map.

\paragraph{{\bf Step 3}} (\emph{Extension of the developing map to $B
  \setminus \ell^{\vec \epsilon}$ and proof of {\rm (P.\ref{pi})} and
  {\rm (P.\ref{piii})}}).  By the description of the image of $F$ in
\cite[Theorem~5]{PeRaVN2011}, we simply need to extend $G_{\vec
  \epsilon}$ at elliptic critical values.  But the behavior of the
affine structure at an elliptic critical value $c$ is well known (see
\cite{MZ}): there exist a smooth map $a \colon V \to \mathbb{R}^2$,
where $V$ is an open neighborhood of $c \in \R^2$, and a
symplectomorphism $\varphi \colon F^{-1}(V) \to M_Q$ onto its image
such that
\begin{eqnarray} \label{star} a \circ F|_{F^{-1}(V)}=Q\circ \varphi: F
  ^{-1}(V) \rightarrow\mathbb{R}^2,
\end{eqnarray}
where $Q$ is the ``normal form" of the same singularity type as $F$,
given by $Q=(x_1^2+\xi_1^2,\xi_2)$ (rank $1$ case) or
$Q=(x_1^2+\xi_2^2,x_2^2+\xi_2^2)$ (rank $0$ case).  Here $M_Q =
\R^2\times{\rm T}^*\mathbb{T}^1 = \mathbb{R}^2 \times
\mathbb{T}^1\times \R$ (rank $1$) or $M_Q=\R^4$ (rank $0$).  It
follows from the formula for $Q$ that $Q$ is generated by a
Hamiltonian $\T^2$\--action, and therefore $a$ is an affine map. On
the other hand, since $F$ and $Q$ have the same singularity type, the
ranks of ${\rm d}F$ and ${\rm d}Q$ must be equal, and the dimensions of the spaces
spanned by the Hessians must be the same as well. Computing the Taylor
expansion of (\ref{star}) shows that ${\rm d}a(c)$ has to be invertible.
Thus, $a$ is a diffeomorphism onto its image.  Therefore $a|_{B_r \cap
  V}$ is a chart for the affine structure of $B_r$.

Thus there exists a unique affine map $A \in {\rm Aff}(2,\Z)$ such
that
$$
(G_{\vec \epsilon})|_{Br\cap V}=A \circ a|_{Br \cap V}
$$
and we may simply extend $G_{\vec \epsilon}$ to $B_r \cup V$ by
letting
$$
(G_{\vec \epsilon})|_{V}=A \circ a.
$$
Because $a$ is a diffeomorphism into its image, we see that
$G_{\vec{\epsilon}}$ remains a local diffeomorphism. This proves
(P.\ref{pi}) with $f_{\vec{\epsilon}}|_{B \setminus \ell^{\vec
    \epsilon}}:=G_{\vec \epsilon}$.

The fact that $G_{\vec \epsilon}$ extends to a smooth multi-valued map
$B_r \to \R^2$ follows from the smoothness of the universal cover as
in \cite[Section 3]{VN2007}. Formula (\ref{jump}) follows from the
calculation of the monodromy around focus-focus singularities, which
is carried out exactly as in \cite[pages 921-922]{VN2007} since it
relies only on the properness of $F$ (and not on the properness of
$J$).  This proves (P.\ref{piii}).

\paragraph{{\bf Step 4}} (\emph{Extension to a homeomorphism $B \to
  \R^2$}). Finally we show that $G_{\vec \epsilon}$ may be extended to
a homeomorphism $f_{\vec \epsilon} \colon B \to f_{\vec \epsilon}(B)
\subset \mathbb{R}^2$, which will prove the theorem if no half lines
in $\ell^{\vec{\epsilon}}$ overlap.

Because of (P.\ref{piii}), if $c_0 \in \ell^{\vec \epsilon}$, but
$c_0$ is not a focus-focus value, it follows that $G_{\vec \epsilon}$
has a unique continuation to $c_0$, from the left, and a unique
continuation from the right.  As in \cite[Proof of Theorem
3.8]{VN2007}, the fact that these continuations coincide follows from
the fact that the affine monodromy around a focus\--focus singularity
leaves the vertical line through $c_0$ pointwise invariant. That
$G_{\vec \epsilon}(c)$ has a limit as $c$ approaches the focus-focus
value follows from the $z\,{\rm log}z$ behavior of
$G_{\vec{\epsilon}}$, see \cite[Section 3]{VN2003}.

Let $f_{\vec \epsilon} \colon B \setminus \{c_i \mid i\in Z\} \to
\R^2$ be this continuous extension of $G_{\vec \epsilon}$. Because of
(P.\ref{piii}), the extensions of the vertical derivative
$\partial_yf_{\vec \epsilon}$ from the left or from the right coincide
on $\ell^{\vec \epsilon}$.  Since any extension of
$G_{\vec{\epsilon}}(x,y) = (x, h_{\vec{\epsilon}}(x,y))$ is a local
diffeomorphism, $\partial_yh_{\vec \epsilon}$ cannot vanish on
$\ell^{\vec \epsilon}$. Thus, $f_{\vec \epsilon}|_{\ell^{\vec
    \epsilon}}$ is injective.

This implies that $f_{\vec \epsilon}$ is injective on $B \setminus
\{c_i \mid i\in Z\}$.

Extend by continuity the map $f_{\vec \epsilon}$ to $\{c_i \mid i\in Z
\}$.  So far, we have shown that $f_{\vec{\epsilon}}:B \rightarrow
\mathbb{R}^2$ is a continuous injective map which is an affine
diffeomorphism off $\ell^{\vec{\epsilon}}$. It remains to be shown
that $(f_{\vec{\epsilon}})^{-1}$ is continuous on
$f_{\vec{\epsilon}}(B)$. Since $f_{\vec{\epsilon}}$ is a
diffeomorphism off $\ell^{\vec{\epsilon}}$, we only have to show that
$(f_{\vec{\epsilon}})^{-1}$ is continuous at points of
$f_{\vec{\epsilon}}(\ell^{\vec{ \epsilon}})$.

Let $c_0=(x_0,y_0) \in \mathring{\ell}^{\vec \epsilon}$ and
$\widehat{G}_{\vec \epsilon} \colon U \to \widehat{G}_{\vec
  \epsilon}(U)$ be an affine chart which coincides with $f_{\vec
  \epsilon}$ on the left hand\--side of $c_0$ in $U$, that is, on
$$U_{{\rm left}}:=
\Big\{(x,y) \in U \, \, | \,\, x \leq x_0\Big\}.
$$ 
Then,
$$
(f_{\vec \epsilon})^{-1}|_{f_{\vec \epsilon}(U_{{\rm left}})}
=\widehat{G}_{\vec \epsilon}^{-1}|_{f_{\vec \epsilon} (U_{{\rm
      left}})}
$$
and hence it is continuous on $f_{\vec \epsilon}( U_{{\rm
    left}})$. Similarly, it is proved that $(f_{\vec
  \epsilon})^{-1}|_{f_{\vec \epsilon}(U_{{\rm right}})}$ is continuous
on $U_{{\rm right}}$, which shows that $(f_{\vec{\epsilon}})^{-1}$ is
continuous at $f_{\vec{\epsilon}}(c_0)$ for any $c_0
\in\mathring{\ell}^{\vec \epsilon}$.

Finally, we need to prove the continuity of
$(f_{\vec{\epsilon}})^{-1}$ at all points $f_{\vec{\epsilon}}(c_i)$,
where $c_i=(x_i,y_i)$, $i\in Z$, are the focus-focus values in $B$.
Let $\ell_i$ be the vertical line containing $c_i$.  Let us use the
following local description of the behavior of $f_{\vec \epsilon}$ at
$c_i$, \cite{VN2003}, \cite[Proof of Theorem~3.8]{VN2007}: for all
$(x,y) \in U\setminus \ell_i$,
$$
f_{\vec \epsilon}(x,y)=(x,\, {\rm Re}(z{\,\rm log}z)+g(x,y)),
$$
where $z=\hat{y}(x,y)+ {\rm i} x \in \mathbb{C}$, $g$ and $\hat{y}$
are smooth functions and $\hat{y}(0,0)=y_0$. It follows that
$\frac{\partial f_{\vec{\epsilon}}}{\partial y}$ is continuous near
$c_i$ (which is in agreement with~\eqref{jump}) and is equivalent, as
$z\to 0$, to $K\ln(x^2+y^2)$ for some constant $K>0$. Hence we get the
lower bound $$\left|\frac{\partial f_{\vec{\epsilon}}}{\partial
    y}\right| \geq C>0$$ for some constant $C$, if $(x,y)$ is in a
small neighborhood $V=[x_i-\eta,x_i+\eta]\times[y_i-\eta,y_i+\eta]$ of
$c_i$, for some $\eta>0$. For simplicity of notation, let us assume
for instance that $\epsilon_i=1$; the case $\epsilon_i=-1$ is treated
similarly.  Hence, for any fixed $x\in[x_i-\eta,x_i+\eta]$, the
function $y \mapsto f_{\vec{\epsilon}}(x, y)$ is invertible on
$(y_i,y_i+\eta]$ and has bounded derivative, uniformly for
$x\in[x_i-\eta, x_i+\eta]$. Hence, the inverse
$(f_{\vec{\epsilon}})^{-1}$ extends by continuity at $f(c_i) =
f(x_i,y_i)$. The limit of the inverse at this point must equal $y_i$
since $f_{\vec{\epsilon}}$ is injective.  This shows that
$(f_{\vec{\epsilon}})^{-1}$ is continuous at the point
$f_{\vec{\epsilon}}(c_i)$.

This concludes the proof of Theorem~\ref{theo:polygon} in case there
is no overlap of vertical lines in $\ell^{\vec \epsilon}$.

\paragraph{{\bf Step 5}} (\emph{Proof in the case of overlapping lines
  in $\ell^{\vec \epsilon}$}).  If on the other hand there are
overlaps of vertical lines in $\ell^{\vec \epsilon}$, then $B_r
\setminus \ell^{\vec \epsilon}$ may not be simply connected.  In this
case, for each $c\in B_r \setminus \ell^{\vec \epsilon}$, we need to
choose a path $\gamma_c$ joining $q_0$ to $c$ inside
$B_r\setminus\{c_i\mid i\in Z\}$, which we do as follows.  We replace
the focus-focus critical values $c_i$ which lie in the same vertical
line by nearby points $\widetilde{c}_i$, in such a way that their
$x$-coordinates are all pairwise distinct.  This turns the
corresponding set $B_r \setminus \tilde{\ell}^{\vec{\epsilon}}$ into a
simply connected set; thus, up to homotopy, there is a unique path
$\gamma_c$ joining $q_0$ to $c$ inside $B_r
\setminus\tilde{\ell}^{\vec \epsilon}$, and we can always assume that
this path avoids the true focus-focus values $c_i$.

The homotopy class of $\gamma_c$ depends on the choice of ordering of
the $x$-coordinates of the points $\widetilde{c}_i$. However, we claim
that the value $$G_{\vec{\epsilon}}(c):=
\widetilde{G}_{\vec{\epsilon}}([\gamma_c])$$ is well defined. Indeed,
decomposing a permutation as a product of transpositions of the form
$(i,i+1)$ or $(i+1,i)$, it suffices to consider only the case where we
permute two points, $\widetilde{c}_i$ and $\widetilde{c}_{i+1}$, which
lie in adjacent vertical lines. In this case, one can check that the
homotopy class $[\gamma_c]$ is modified by a commutator
$g_ig_{i+1}g_i^{-1}g_{i+1}^{-1}$, where $g_i$, $i\in Z$, is a set of
generators of the fundamental group of $B_r\setminus\{c_i \mid i \in
Z\}$. But the monodromy representation is Abelian, due to the global
$S^1$ action (see~\cite{cushman-vungoc}).  It follows that, as
required, the value $\widetilde{G}_{\vec \epsilon}([\gamma_c])$ is
invariant under this transposition.

Now that $G_{\vec \epsilon}$ is defined, the previous proof for
(P.\ref{pi}) and (P.\ref{pii}) remains valid. The formula in
(P.\ref{piii}) follows from the fact that the monodromy representation
is Abelian.

\section{Proof of Theorem \ref{theo:b} and the spherical pendulum
  example} \label{sec:b}

\begin{proof}[Proof of Theorem \ref{theo:b}]

  The proof is divided into three steps.

  \paragraph{{\bf Step 1}} Let $f_{\vec \epsilon} \colon B \to f_{\vec
    \epsilon} (B) \subset \mathbb{R}^2$ be the homeomorphism in
  Theorem~\ref{theo:polygon}.  Let $H^{+},\, H^{-} \colon J(M) \to
  \overline{\RM}$ be the functions defined by $H^{+}(x) :=
  \sup_{J^{-1}(x)} H$ and $H^{-}(x) :=\inf_{J^{-1}(x)} H$.  Since $J$
  is Morse-Bott with connected fibers (see, e.g.,
  \cite[Theorem~3]{PeRa11}) we may apply \cite[Theorem
  5.2]{PeRaVN2011} which states that $H^{+},H^{-}$ are continuous and
  $ F(M)=\left(\text{hypograph~of~}H^+\right) \cap
  \left(\text{epigraph~of~}H^-\right).  $ Since $H^{+},H^{-}$ are
  continuous and $F$ is proper, one can check that the sets
  $K^+,K^{-}$ defined in the theorem are open in $J(M)$. Hence we have
  the following equality of sets, where the four sets on the right
  hand side are open and disjoint: $ J(M)=(K^+\cap K^{-}) \cup
  (K^+\setminus K^{-}) \cup (K^{-}\setminus K^{+}) \cup ( J(M)
  \setminus (K^+\cup K^{-})).$ By assumption, $\partial K^{+}$ and
  $\partial K^{-}$ are discrete, and therefore there exists a
  countable collection of intervals $\{I_j\}_{j\in \Z}$, whose
  interiors are pairwise disjoint, such that each $I_j$ is contained
  in one of the above four sets $(K^+\cap K^{-})$, $(K^+\setminus
  K^{-})$, $(K^{-}\setminus K^{+})$ or $( J(M) \setminus (K^+\cup
  K^{-}))$, and such that $ J(M)=\bigcup_{j \in \Z}I_j $.

  By letting for every $j \in \Z$, $\mathcal{C}^{\vec
    \epsilon}_j:=f_{\vec \epsilon} ((I_j \times \R) \cap F(M)) \subset
  I_j \times \mathbb{R}$, we obtain $ f_{\vec
    \epsilon}(F(M))=\bigcup_{j \in \Z} \mathcal{C}^{\vec \epsilon}_j,
  $.

  \medskip

  \paragraph{{\bf Step 2}} (\emph{Proof of {\rm (P.\ref{bp1})} and
    {\rm (P.\ref{bp2})}}). We consider the four cases.
  \begin{enumerate}
  \item \label{n1} If $I_j \subset (K^+\cap K^{-})$, then the fibers
    of $J$ are compact, and hence the analysis carried out in
    \cite[Theorem~3.8, (v)]{VN2007} applies. This implies that
    $\mathcal{C}^{\vec \epsilon}_j$ is of type I.

  \item\label{n2} Consider now $I_j \subset (K^-\setminus K^{+})$.
    Let $x \in I_j$. Since $J^{-1}(x)\cap H^{-1}((-\infty,0])$ is
    compact, $H_-(x)$ is finite. On the other hand, $H_+(x)$ must be
    $+\infty$; otherwise, $F^{-1}(\{x\}\times [0,H_+(x)])$ would be
    compact, by the properness of $F$. This would imply that
    $J^{-1}(x)$ is compact, a contradiction.

    Let $y \in H(J^{-1}(x))$.  Recall that $
    f_{\vec{\epsilon}}(x,y)=(x, f_{\vec{\epsilon}}^{(2)}(x,y)) $ and
    that $\frac{\partial f_{\vec{\epsilon}}^{(2)}}{\partial y}$ is
    continuous on $F(M)$ (see~\eqref{jump}). Since $\frac{\partial
      f_{\vec{\epsilon}}^{(2)}}{\partial y}>0$, the image $ f_{\vec
      \epsilon} ((I_j\times \R) \cap F(M))= \mathcal{C}^{\vec
      \epsilon}_j $ has the form
$$
\Big \{(x,z) \mid x \in I_j,\,\,\, h^{\vec \epsilon}_{-}(x)\leq z <
h^{\vec{\epsilon}}_+(x) \Big\},
$$
where
\begin{align*}
  h^{\vec{\epsilon}}_{-}(x)&:=\min_{y \in J^{-1}(x)}
  f_{\vec{\epsilon}}^{(2)}(x,y)
  =f_{\vec{\epsilon}}^{(2)}(x,H_{-}(x)) \in\mathbb{R}\\
  h^{\vec{\epsilon}}_{+}(x)&:=\sup_{y \in J^{-1}(x)}
  f_{\vec{\epsilon}}^{(2)}(x,y)= \lim_{y \to +\infty}
  f^{(2)}_{\vec{\epsilon}}(x,y) \in \overline{\R}.
\end{align*}
We have used the fact that $f_{\vec{\epsilon}}$ is a homeomorphism, so
that the point $(x,h^{\vec{\epsilon}}_{+}(x))$ cannot belong to
$\mathcal{C}^{\vec \epsilon}_j$. The function $h_{+}^{\vec\epsilon{}}$
is a pointwise limit of continuous functions, so it is continuous on a
dense set. However, we need to show that it is lower semicontinuous.

The new map
$$
\left(J,f_{\vec{\epsilon}}^{(2)}(J,H)\right)=f_{\vec{\epsilon}} \circ
F
$$ 
satisfies the hypothesis of the following slight variation of
\cite[Theorem~5.2]{PeRaVN2011} for continuous maps (the proof of which
is identical line by line): \emph{Let $\widehat{M}$ be a connected
  smooth four-manifold. Let $\widehat
  F=(\widehat{J},\,\widehat{H}):\widehat M \to \mathbb{R}^2$ be a
  continuous map.  Suppose that the component $\widehat{J}$ is a
  smooth non-constant Morse-Bott function with connected fibers.  Let
  $\widehat{H}^{+},\, \widehat{H}^{-} :
  \widehat{J}\left(\widehat{M}\right) \to \overline{\mathbb{R}}$ be
  defined by $\widehat{H}^{+}(x) := \sup_{\widehat
    J^{-1}(x)}\widehat{H}$ and $\widehat{H}^{-}(x)
  :=\inf_{\widehat{J}^{-1}(x)} \widehat{H}$. Then the functions
  $\widehat{H}^{+}$ and $-\widehat{H}^{-}$ are lower semicontinuous.
} This statement gives the required semicontinuity in the statement of
Theorem~\ref{theo:b}.

The analysis of the graph of $h^{\vec \epsilon}_{-}$, which
corresponds to the elliptic critical values and possible cuts due to
focus-focus singularities, was carried out in
\cite[Theorem~3.8]{VN2007}: it is continuous, piecewise linear, and
convex. Thus, $\mathcal{C}^{\vec \epsilon}_j$ is of type II.

\item\label{n3} The fact that $I_j \subset (K^{+}\setminus K^{-})$
  implies that $\mathcal{C}^{\vec \epsilon}_j$ is of type III can be
  proved in a similar way to (\ref{n2}).
\item\label{n4} Finally, let $I_j \subset J(M) \setminus (K^+\cup
  K^{-})$. In this case, we must have, for any $x\in I_j$,
  $H_+(x)=+\infty$ and $H_-(x)=-\infty$.  Therefore, $ f_{\vec
    \epsilon} ((I_j\times \R) \cap F(M))= \mathcal{C}^{\vec
    \epsilon}_j $ has the form
$$
\Big \{(x,z) \mid x \in I_j,\,\,\, \lim_{y \to -\infty}
f_{\vec{\epsilon}}^{(2)}(x,y) < z < \lim_{y\to+ \infty}
f_{\vec{\epsilon}}^{(2)}(x,y)\Big\},
$$
where the limits are understood in $\overline{\R}$. Thus,
$\mathcal{C}^{\vec \epsilon}_j$ is of type IV.
\end{enumerate}

This proves (P.\ref{bp1}).

\paragraph{{\bf Step 3}} (\emph{Proof of {\rm (P.\ref{bp4}))}.} By the
action\--angle theorem, $(A_1,A_2):=f_{\vec \epsilon} \circ F$ is a
set of action variables near $F^{-1}(x,y)$ with
$$
A_1=J,\,\,\,\,\,\,\,\,\, A_2=A_2(J,H).
$$
We have a symplectomorphism $\mathcal{U} \to \T^2_{\theta}\times
\R^2_A $, where $\mathcal{U}$ is a saturated neighborhood of the fiber
$F^{-1}(x,y)$, and the symplectic form on $\T^2_{\theta}\times \R^2_A
$ is given by ${\rm d}A_1 \wedge {\rm d}\theta_1 +{\rm d}A_2 \wedge
{\rm d}\theta_2$. We have
$$
\mathcal{U} \cap J^{-1}(x)=A^{-1}_1(x)=\Big\{(\theta,A)\mid \theta \in
\T^2,\,\, A_1=x\Big\}.
$$
Since the normalized Liouville volume form is $(2\pi)^{-2}{\rm
  d}A_1\wedge {\rm d}A_2 \wedge {\rm d}\theta_1 \wedge {\rm
  d}\theta_2$, the induced volume form on $\mathcal{U}\cap J^{-1}(x)$
is $(2\pi)^{-2}{\rm d}A_2 \wedge {\rm d}\theta_1 \wedge {\rm
  d}\theta_2$.  In other words, the push\--forward by $A_2$ of the
Liouville measure on $J^{-1}(x)$ has a constant density $1$ against
the Lebesgue measure ${\rm d}A_2$.  This gives the result because the
set of critical points of $H$ in $J^{-1}(x)$ has zero\--measure in
$J^{-1}(x)$. This concludes the proof of Theorem~\ref{theo:b}.
\end{proof}

\begin{example}(Spherical Pendulum) \label{sph} Semitoric systems with
  proper $F=(J, H)$ but non-proper $J$ include many simple integrable
  systems from classical mechanics, such as the spherical pendulum,
  which we now recall. The phase space of the spherical pendulum is $M
  = {\rm T}^\ast S^2$ with its natural exact symplectic form. Let the
  circle $S^1$ act on the sphere $S^2\subset\R^3$ by rotations
  about   the vertical axis.  Identify ${\rm T}^\ast S^2$ with ${\rm T}S^2$,
  using the standard Riemannian metric on $S^2$, and denote its points
  by $({q},{p})= (q^1,q^2,q^3,p_1,p_2,p_3) \in {\rm T}^\ast S^2 = {\rm
    T} S^2$, $\|{q}\|^2=1$, $q\cdot p = 0$. Working in units in which
  the mass of the pendulum and the gravitational acceleration are
  equal to one, the integrable system $F:=(J,H) \colon {\rm T}S^2 \to
  \R^2$ is given by the momentum map of the (co)tangent lifted
  $S^1$-action on ${\rm T}S^2$,
  \begin{eqnarray} \label{J} J(q^1,q^2,q^3,p_1,p_2,p_3) = q^1p_2 -
    q^2p_1,
  \end{eqnarray}
  and the classical Hamiltonian
  \begin{eqnarray} \label{H} H(q^1,q^2,q^3, p_1,p_2,p_3)=
    \frac{(p_1)^2+(p_2)^2+(p_3)^2}{2} +q^3,
  \end{eqnarray}
  the sum of the kinetic and potential energy. 
  The momentum map $J$ is not proper because the sequence $\{(0,0,1,
  n,n,0)\}_{n \in \mathbb{N}}\subset J^{-1}(0) \subset {\rm T} S^2$
  does not contain any convergent subsequence.  The Hamiltonian $H$ is
  proper since $H^{-1}([a, b])$ is a closed subset of the compact
  subset of ${\rm T}S^2$ for which $2(a-1) \leq \| p\|^2 \leq
  2(b+1)$. Therefore, $F$ is also proper. In this case, $F(M)$ is
  depicted in Figure~\ref{critical_set_spherical_pendulum.figure} and
  the cartographic invariant of $(M,F)$ is represented in Figure
  \ref{fig:pendulum-polygon}; we call it $\Delta(F)$.

\begin{figure}[h]
  \centering
  \includegraphics[width=0.3\textwidth]{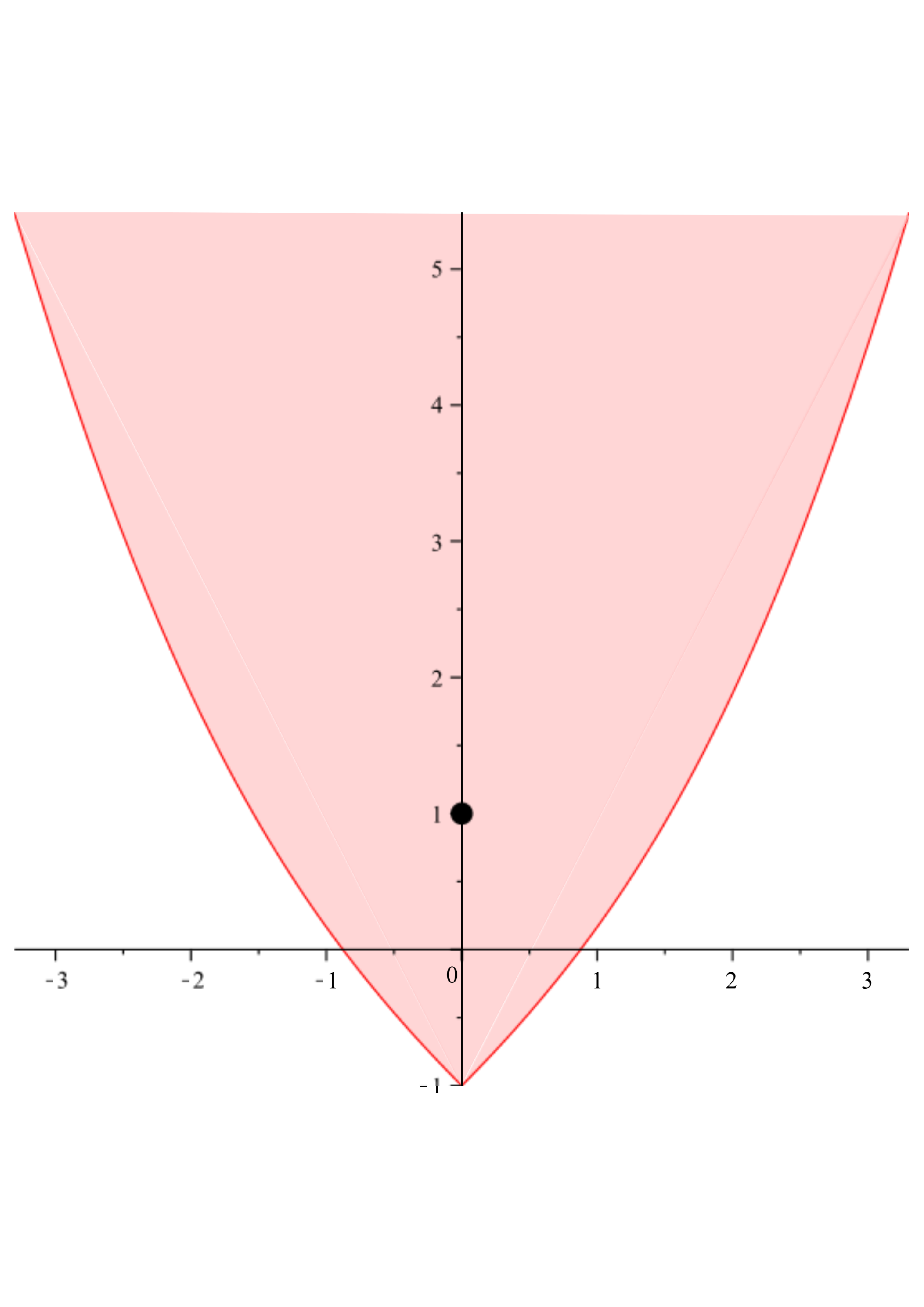}
  \caption{Image of of $F:=(J,H)$ given by (\ref{J}) and
    (\ref{H}). The edges are the image of the transversally-elliptic
    singularities (rank 1), the vertex is the image of the
    elliptic-elliptic singularity (rank 1), and the dark dot in the
    interior is the image of the focus-focus singularity (rank 0).
    All other points are regular (rank 2).}
  \label{critical_set_spherical_pendulum.figure}
\end{figure}

\begin{figure}[h]
  \centering
  \includegraphics[width=0.35\textwidth]{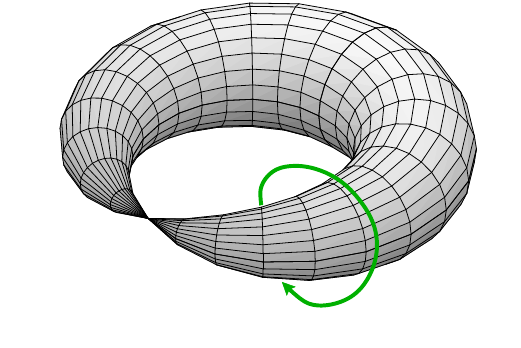}
  \caption{Fiber of $F:=(J,H)$ given by (\ref{J}) and (\ref{H}) over
    the focus\--focus critical value $(0,1)$.}
  \label{focusfocus.figure}
\end{figure}

There is precisely one elliptic-elliptic singularity at
$((0,0,-1),(0,0,0))$, one focus-focus singularity at
$((0,0,1),(0,0,0))$, and uncountably many transversally-elliptic type
singularities.  The range $F(M)$ and the set of critical values of
$F$, which equals its bifurcation set, are given in Figure
\ref{critical_set_spherical_pendulum.figure}.  The image under $F$ of
the focus-singularity is the point $(0,\,1)$.  The image under $F$ of
the elliptic-elliptic singularity is the point $(0,\,-1)$. We know
that the image by $J$ of critical points of $F$ of rank zero is the
singleton $\{0\}$.  Hence (one of the two representatives of)
$\Delta(F)$ has no vertex in both regions $J<0$ and $J>0$.  In each of
these regions, there is only one connected family of transversally
elliptic singular values. This means that $\Delta(F)$ in these region
consist of a single (semi-infinite) edge. We can arbitrarily assume
that, in the region $J<0$, the edge in question is the negative real
axis $\{(y,0)\mid y<0\}$. Then we have a vertex at the origin
$(x=0,y=0)$.
\begin{figure}[h]
  \centering
  \includegraphics[width=0.4\textwidth]{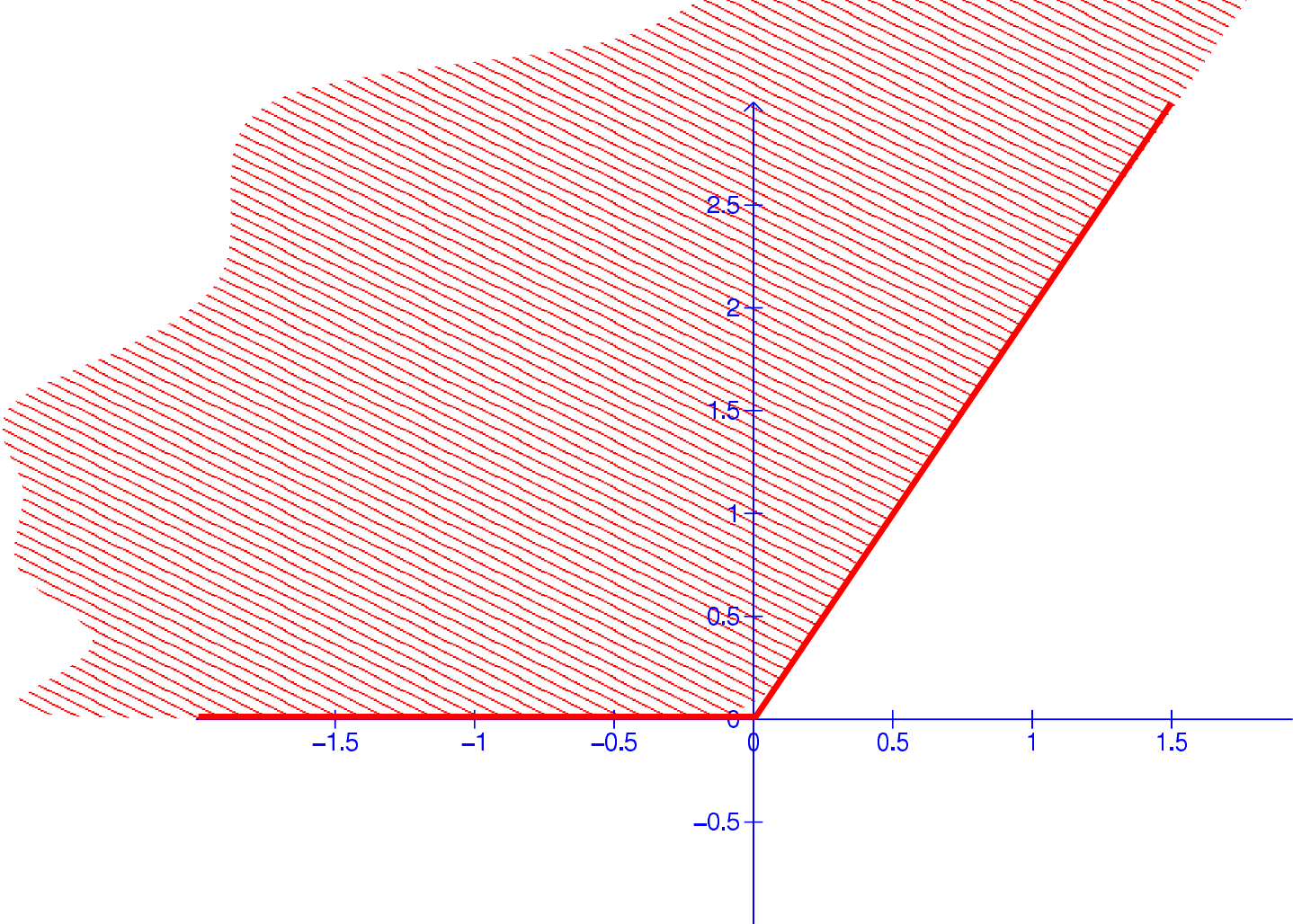}
  \caption{One of the two cartographic projections of the spherical
    pendulum.}
  \label{fig:pendulum-polygon}
\end{figure}
\end{example}

We still need to compute the slope of the edge corresponding to the
region where $J>0$. For this, we apply~\cite[Theorem 5.3]{VN2007},
which states that the change of slope can be deduced from the isotropy
weights of the $S^1$ momentum map $J$ and the monodromy index of the
focus-focus point.  (We need to include the focus-focus point because
its $J$-value is the same as the $J$-value of the elliptic-elliptic
point.) So we compute these weights now. The vertex of the polygon
corresponds to the stable equilibrium at the South Pole of the
sphere. We use the variables $(q_1,q_2,p_1,p_2)$ as canonical
coordinates on the tangent plane to the South Pole. In these
coordinates, the quadratic approximation of $J$ is in fact exact, and
equal to $J^{(2)}=q^1p_2-q^2p_1$. Now consider the following change of
coordinates:
\begin{equation}
  \begin{aligned}
    x_1 := \frac{q_2-q_1}{\sqrt{2}},\;\;\; x_2 :=
    \frac{p_1+p_2}{\sqrt{2}},\;\;\; \xi_1 :=
    \frac{p_1-p_2}{\sqrt{2}},\;\;\; \xi_2 := \frac{q_1+q_2}{\sqrt{2}}.
  \end{aligned}
  \label{equ:change}
\end{equation}
This is a canonical transformation and the expression of $J^{(2)}$ in
these variables is $J^{(2)} = \frac{1}{2}(x_2^2+\xi_2^2) -
\frac{1}{2}(x_1^2+\xi_1^2)$.  Since the Hamiltonian flows of
$\frac{1}{2}(x_2^2+\xi_2^2)$ and $\frac{1}{2}(x_1^2+\xi_1^2)$ are
$2\pi$-periodic, this formula implies that the isotropy weights of $J$
at this critical point are $-1$ and $1$. From~\cite{VN2007}, we know
that the difference between the slope of the edge in $J>0$ and the
slope of the edge in $J<0$ must be equal to $\frac{-1}{ab} + k$, where
$a$ and $b$ are the isotropy weights, and $k$ is the monodromy index.
For the spherical pendulum, $k=1$ because there is only one simple
focus-focus point. Thus the new slope is $\frac{-1}{ab} +
k=1+1=2$. This leads to the polygonal set depicted in
Figure~\ref{fig:pendulum-polygon}.

\section{Proof of Theorem \ref{niceexample}}

We give here the outline of the construction of a family of integrable
systems defined on an open subset of $S^2\times S^2$, leading to the
proof of Theorem~\ref{niceexample}.

\paragraph{{\bf Step 1}} (\emph{Construction of suitable smooth
  functions.}) Let $$\Omega:=[-1,\,1] \times [-1,\,1] \setminus \{0\}
\times [0,\,1].$$ Let $\chi \colon [-1,\,1] \to \mathbb{R}$ be any
$\op{C}^{\infty}$-smooth function such that $\chi(z_2)\equiv 1$ if
$z_2 \le 0$ and $0 < \chi(z_2) \neq 1$ if $z_2>0$. Define $f \colon
\Omega \to \mathbb{R}$ by
\begin{eqnarray}
  \label{function_f}  
  f(z_1,\,z_2)=
  \left\{ \begin{array}{rl}
      1 & \textup{ if } z_1\le 0; \\
      \chi(z_2) & \textup{ if } z_1>0. 
    \end{array} 
  \right.
\end{eqnarray}
and note that it is smooth on $\Omega$.

\paragraph{{\bf Step 2}.} (\emph{Definition of a connected smooth
  $4$-manifold $M$.})  Let $S^2$ be the unit sphere in $\mathbb{R}^3$
and $M:=S^2 \times S^2 \setminus \{((x_1,y_1,z_1),\,(x_2,y_2,z_2)) \in
S^2 \times S^2 \mid z_1=0,\, z_2 \ge 0\}$, where a point in the first
sphere has coordinates $(x_1,\,y_1,\,z_1)$ and a point in the second
sphere has coordinates $(x_1,\,y_2,\,z_2)$.  Since $M \subset S^2
\times S^2$ is an open subset, it is a smooth manifold. Moreover, $M$
is connected.

\paragraph{{\bf Step 3.}} (\emph{Definition of a smooth $2$-form
  $\omega \in \Omega^2(M)$.})  Let $\pi_i \colon S^2 \times S^2 \to
S^2$ be the projection on the $i^{th}$ copy of $S^2$, $i=1,\,2$.  Let
$\omega_i:=\pi^*\omega_{S^2}$ where $\omega_{S^2}$ is the standard
area form on $S^2$. Define the $2$\--form $\omega$ on $M$ by
\begin{equation}
  \label{ex_omega}
  \omega_{(m_1,\,m_2)}:=(\omega_1)_{m_2} +
  f(z_1,\,z_2) \, (\omega_2)_{m_2}
\end{equation}
for every $(m_1,\, m_2) \in M$. Since $f$ is smooth by Step 1,
$\omega$ is also smooth, i.e., $\omega \in \Omega^2(M)$.

\paragraph{{\bf Step 4}} (\emph{The $2$-form $\omega$ is symplectic.})
One can check that $\omega$ is closed because $\deriv{f}{z_1}=0$, and
that $\omega$ is non-degenerate because $f\neq 0$.

\paragraph{{\bf Step 5}} (\emph{$(M,\, \omega)$ with $J:=z_1,\,
  H:=z_2$ satisfies $\{J,\,H\}=0$ and $J$ is a momentum map for a
  Hamiltonian $S^1$-action.}) We let $S^1$ act on $M$ by rotation
about the (vertical) $z_1$-axis of the first sphere and trivially on
the second sphere. The infinitesimal generator of this action equals
the vector field $\mathcal{X}((x_1,y_1,z_1),(x_2,y_2,z_2))=
((-y_1,x_1,0), (0,0,0))$. This immediately shows that $J=z_1$ is a
momentum map for this action.

\paragraph{{\bf Step 6}} (\emph{$(M,\, \omega)$ with $J:=z_1,\,
  H:=z_2$ is a generalized semitoric system with only elliptic
  singularities.}) A direct verification shows that the rank zero
critical points are precisely $(N_1,N_2)$, $(N_1,S_2)$, $(S_1,N_2)$,
and $(S_1,S_2)$, where $N_i$, $S_i$ are the North and South Poles on
the first and second spheres, respectively. One can verify that these
critical points are non-degenerate, in the sense that a generic
combination $({\rm i}a,-{\rm i}a,{\rm i}bf(1),-{\rm i}bf(1))$ of the
linearizations of the vector fields $\mathcal{X}_J$ and
$\mathcal{X}_H$ at each of these points $(0,0,{\rm i}f(1),-{\rm
  i}f(1))$ and $(0,0,{\rm i},-{\rm i})$ has four distinct
eigenvalues. Thus these singularities are of elliptic-elliptic type.
The rank one critical points are $(N_1,(x_2,y_2,z_2))$,
$(S_1,(x_2,y_2,z_2))$, $((x_1,y_1,z_1),N_2)$ with $z_1\neq0$, and
$((x_1,y_1,z_1),S_2)$.  Another simple computation shows that all of
them are non-degenerate and of transversally elliptic type. It follows
that $J:=z_1$, $H:=z_2$ is an integrable system with only
non-degenerate singularities, of either elliptic-elliptic or
transversally elliptic type. Hence $(J:=z_1,\,H:=z_2)$ is a
generalized semitoric system.

Since the range of $F$ is
\begin{equation}
  \label{image_of_F}
  F(M)=[-1,\,1]\times [-1,\,1] \setminus 
  \{ z_1 =0, \, z_2\ge 0\},
\end{equation} 
is not a closed set (see also Figure \ref{fig:Fig_im_F}), it follows
that $F$ is not a proper map.

\begin{figure}[h]
  \centering
  \includegraphics[height=4cm]{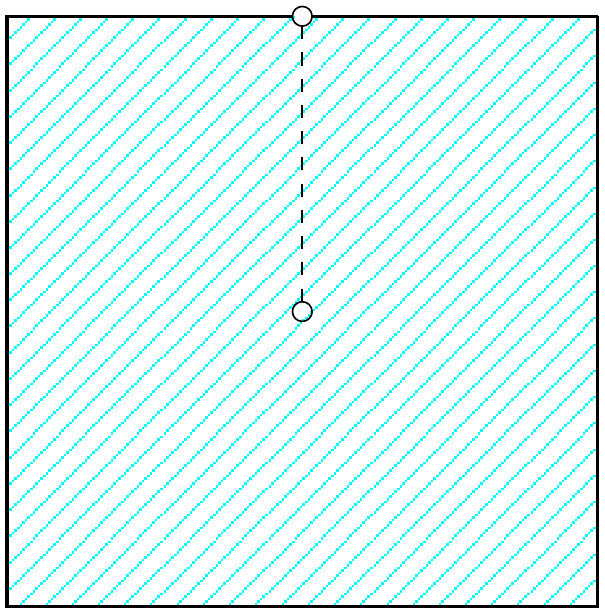}
  \caption{The image $F(M)$.}
  \label{fig:Fig_im_F}
\end{figure}

\paragraph{{\bf Step 7}} (\emph{Modify $F$ suitably to turn it into a
  proper map which still defines a semitoric system.})  Consider the
smooth function $g:\Omega \to \mathbb{R}^2$ defined by $g(z_1,\, z_2)=
\left(z_1,\, \frac{z_2+2}{z_1^2+h(z_2)}\right)$, where $h(z_2)\geq 0$,
$h(z_2)=0$ if and only if $z_2\geq 0$, and $h'(z_2) < 0$ for $z_2<0$.
Define $\widetilde{F}:=F \circ g= \left(J,\, \frac{H+2}{J^2+h(H)}
\right) \colon M \to \mathbb{R}^2$.  Since the Jacobian of
$\widetilde{F}$ is $$\frac{1}{(z_1 ^2+ h(z_2))^2} \left(z_1^2 + h(z_2)
  - h'(z_2)(z_2+2)\right)>0$$ (recall that $h'(z_2) \leq 0$ and $z_1^2
+ h(z_2)>0$ for $(z_1, z_2 \in\Omega$), it follows that
$\widetilde{F}$ is a local diffeomorphism.  In order to show that
$\widetilde{F}$ is proper, it suffices to prove that
$\widetilde{F}^{-1}(K_1 \times K_2)$ is compact if $K_1$ and $K_2$ are
closed intervals of $\mathbb{R}$; since the second component of $g$ is
always positive, we can assume, without loss of generality, that
$K_2=[a,b]$ with $a>0$. To show that $\widetilde{F}$ is proper, we
begin by analyzing $g^{-1}(K_1 \times K_2)$. We have $(z_1, z_2) \in
g^{-1}(K_1 \times K_2)$ if and only if $z_1 \in K_1$ and $0< a \leq
\frac{z_2+2}{z_1^2 + h(z_2)}\leq b$, which is implies
that $$\frac{1}{b} \leq \frac{z_2+2}{b} \leq z_1^2 + h(z_2).$$ Hence
either $z_1^2 \geq 1/2b$ or $h(z_2) \geq 1/2b$. Thus the set
$g^{-1}(K_1 \times K_2)$ lies inside the set $\Omega_b$ in Figure
\ref{FigD}.
\begin{figure}[h]
  \centering
  \includegraphics[height=4.5cm]{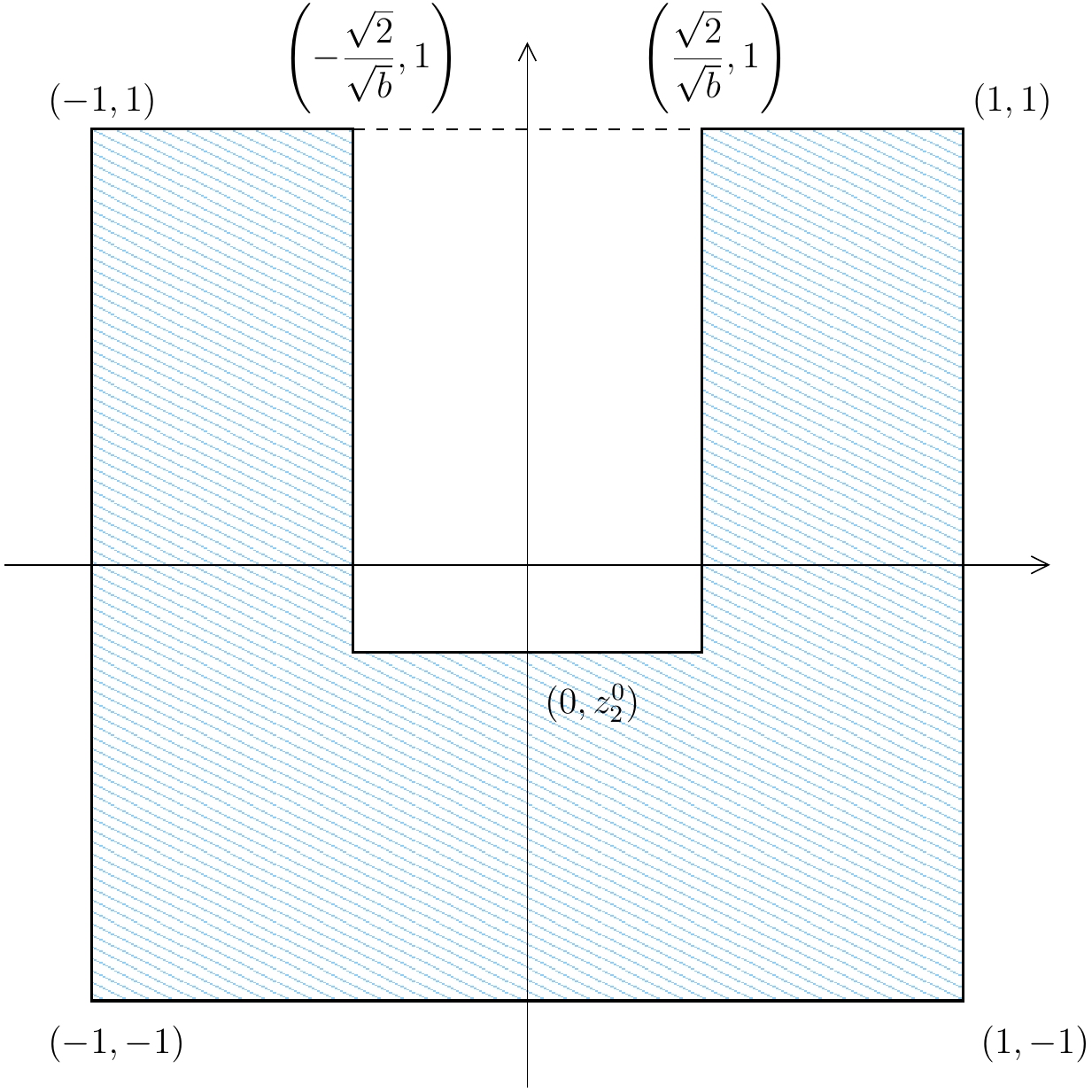}
  \caption{The set $\Omega_b$, where $z_2^0 <0$ is uniquely determined
    by the condition $h(z_2^0) = 1/2b$.}
  \label{FigD}
\end{figure}
Since $g^{-1}(K_1 \times K_2)$ is closed and obviously bounded, as a
subset of the compact set $\Omega_b$, it follows that
$g^{-1}(K_1\times K_2)$ is compact in $\mathbb{R}^2$. Therefore,
$$\widetilde{F}^{-1}(K_1 \times K_2) =
F^{-1}\left(g^{-1}(K_1 \times K_2)\right)$$ is compact in $S^2 \times
S^2$ and is obviously contained in $M$, by construction.  We conclude
that $\widetilde{F}^{-1}(K_1\times K_2)$ is compact in $M$, endowed
with the subspace topology.

Note that $J$ is not proper because $J^{-1}(0)$ is not
compact. However, $\widetilde{F}$ is a general semitoric system and
$\widetilde{F}$ is proper.

\paragraph{{\bf Step 8}.} (\emph{Finding the image
  $\widetilde{F}(M)$.}) Let
$$X:= \Big([-1, 0) \times [-1,1]\Big) \cup 
\Big((0,1]\times [-1,1]\Big) \cup \Big(\{0\} \times [-1,0)\Big).$$ It
follows from \eqref{image_of_F} (see also Figure \ref{fig:Fig_im_F})
that
\begin{align*}
  \widetilde{F}(M) = g(F(M)) = \left.\left\{\left(z_1,
        \frac{z_2+2}{z_1 ^2 + h(z_2)}\right)\, \right|\, (z_1,z_2) \in
    X \right\}.
\end{align*}
Note that the second component of $g$ is an even function of $z_1$ and
hence the range $\widetilde{F}(M)$ is symmetric about the vertical
axis in $\mathbb{R}^2$. A straightforward analysis shows that
$\widetilde{F}(M)$ is the following region in $\mathbb{R}^2$:
\[
\left\{(x,y) \in \mathbb{R}^2 \,\left|\; 0<|x|\leq 1,\, \frac{1}{x^2 +
      h(-1)}\leq y \leq \frac{3}{x^2} \right\}\right.  \bigcup
\left(\{0\} \times \left[\frac{1}{h(-1)}, \infty\right) \right);
\]
see Figure \ref{Fig_im_tilde_F}.
\begin{figure}[h]
  \centering
  \includegraphics[height=6.5cm]{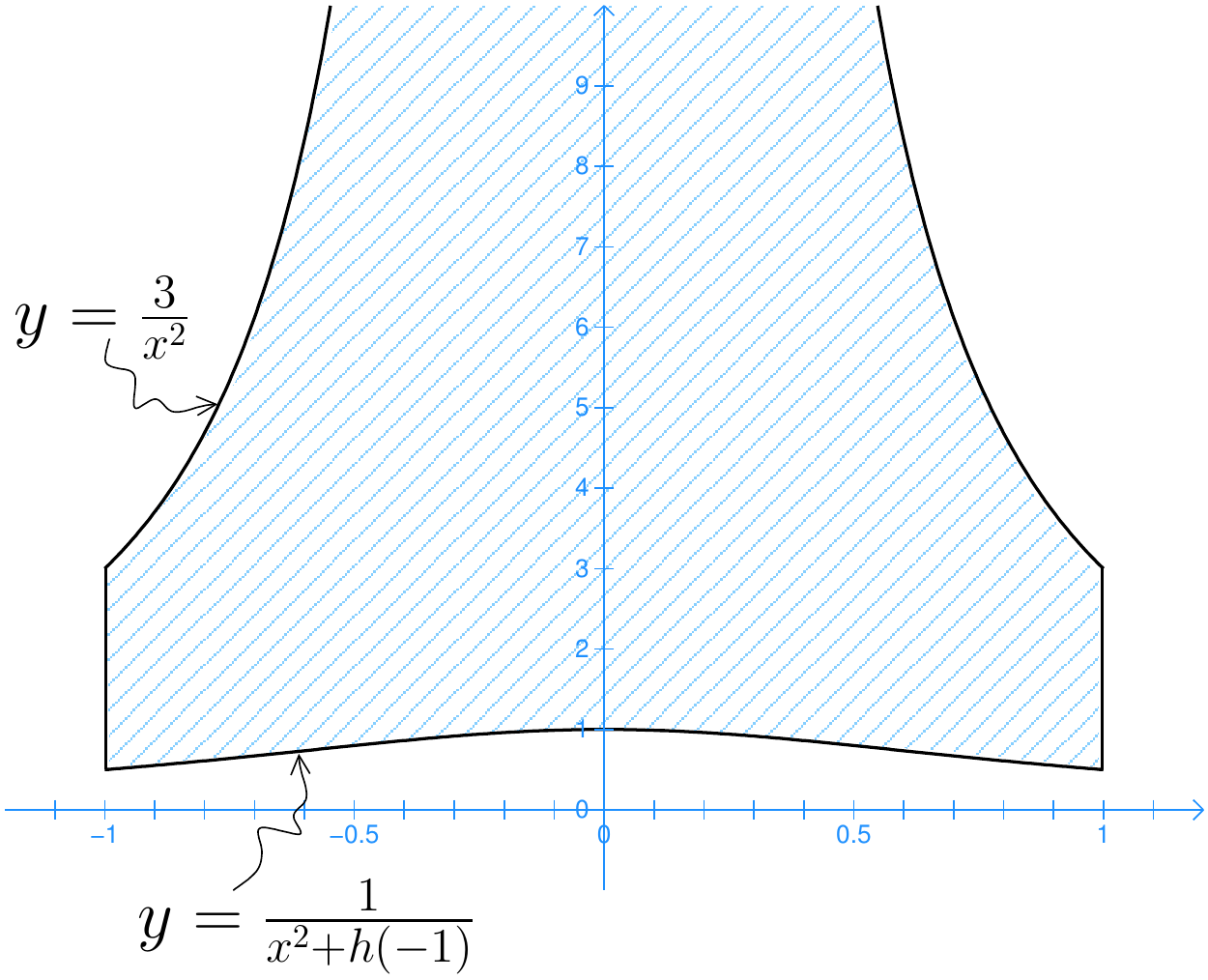}
  \caption{The set $\widetilde{F}(M)$ with the choice $h(-1)=1$.  }
  \label{Fig_im_tilde_F}
\end{figure}

Note that the closed segment $[-1,1] \times \{-1\} \subset F(M)$ is
mapped by $g$ to the lower curve in Figure \ref{Fig_im_tilde_F}, the
two half-open segments $([-1,1]\setminus\{0\}) \times \{1\}$ to the
two upper curves, the two closed vertical segments to the two closed
vertical segments, and the half-open interval $\{0\} \times [-1, 0)$
to the infinite half-open interval $\{0\} \times [1/h(-1), \infty)$.

\paragraph{{\bf Step 9}} (\textit{Construction of the cartographic
  representation.})  We shall construct the cartographic invariant in Theorem
\ref{theo:b} from $\widetilde{F}(M)$ by flattening out the
horizontal curves and setting the height between them at the value
given by the volume of the corresponding reduced phase space.  For
each $|x|\leq 1$, let $\ell(x)$ denote the volume of the reduced
manifold $J^{-1}(x)/S^1$. Then, by Theorem \ref{theo:b}, the
cartographic invariant associated to the general semitoric system $(M,\widetilde{F})$
is given by the formula
$$\Delta = \left\{(x,y) \in\mathbb{R}^2 
  \mid 0<|x| \leq 1, \, 0 \leq y \leq \ell(x) \right\} \cup \{0\}
\times [0, 2 \pi).$$ Using the definition \eqref{function_f} of $f$, a
direct computation shows that if $x<0$ then $J^{-1}(x)/S^1 = \{x\}
\times S^2$, and hence $$\ell(x) = \int_{S^2} f(x,z_2) {\rm d}\theta
\wedge {\rm d}z_2 = 2 \pi\int_{-1}^1 f(x,z_2) {\rm d}z_2 = 4\pi,$$
because for $x<0$, we have $f(x,z_2) =1$ for any $z_2 \in [-1,1]$.
Similarly, if $x>0$ then, as before, the reduced space is
$J^{-1}(x)/S^1= \{x\} \times S^2$, and hence $\ell(x) = 2\pi\int_1^1
\chi(z_2) {\rm d}z_2$.  If $x=0$, then the reduced space
$J^{-1}(0)/S^1$ is the southern hemisphere of the second factor and
hence $\ell(0) = 2 \pi$.  Therefore, the cartographic invariant is given in Figure
\ref{polygon1}.
\begin{figure}[h]
  \centering
  \includegraphics[height=0.3\textwidth] {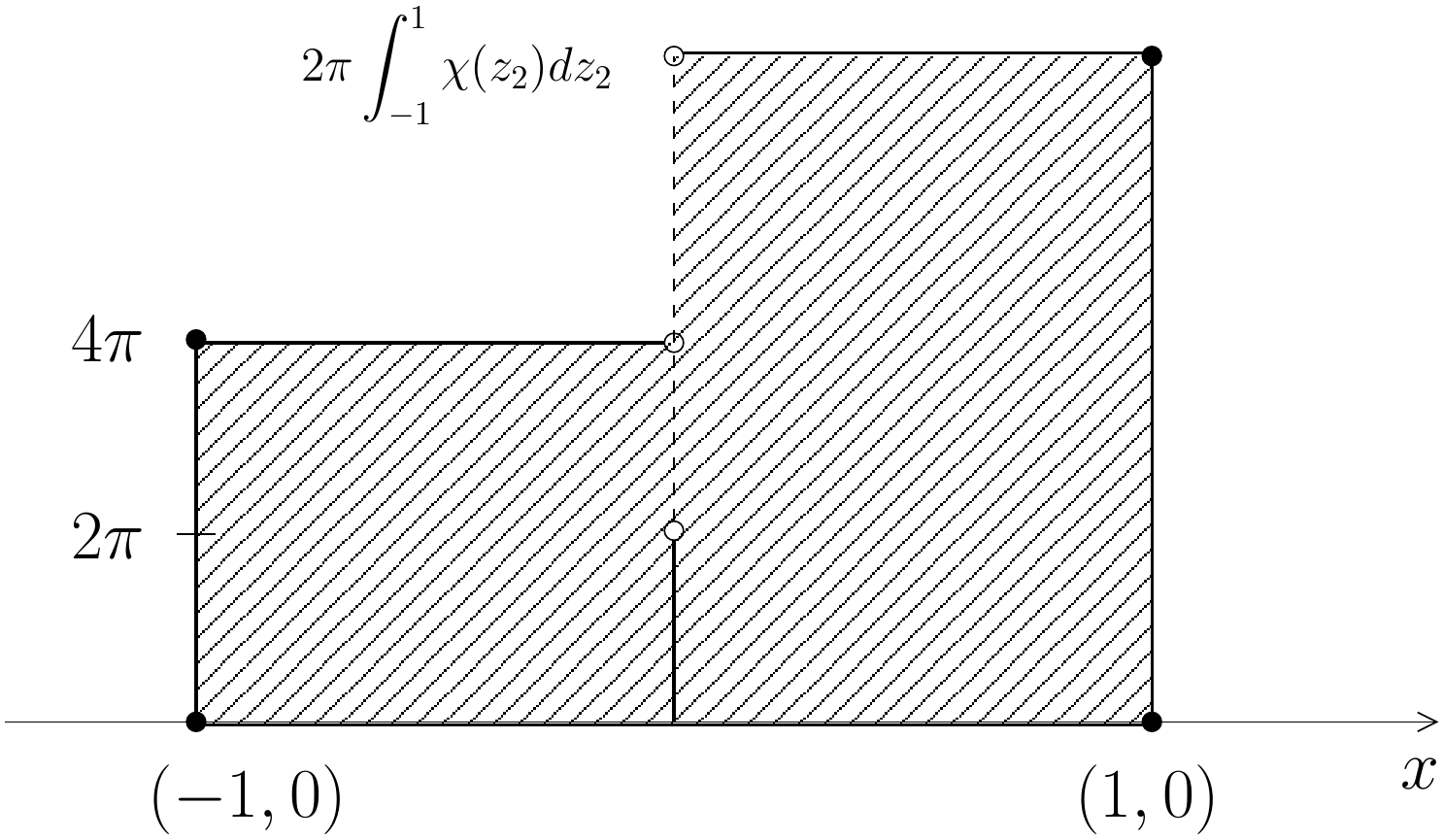}
  \caption{A representative of $\Delta(M, \widetilde{F})$.}
  \label{polygon1}
\end{figure}

We have so far shown (E.\ref{d1})\--(E.\ref{d5}). Theorem~\ref{theo:c}
implies (E.\ref{d6}). We have left to show (E.\ref{d7}).

\medskip

To conclude the proof, we modify the construction above in order to
illustrate the existence of unbounded cartographic invariants with fibers of infinite
length. As we shall see, most of the computations of the previous
example remain valid. Let
\begin{align}
  \label{N_manifold}
  N:=S^2 \times S^2 \setminus &\Big( \{((x_1,y_1,z_1),\,(x_2,y_2,z_2))
  \in S^2 \times S^2 \mid z_1=0,\, z_2
  \geq 0\}  \nonumber \\
  & \qquad \cup \{((x_1,y_1,z_1),\, (x_2,y_2,z_2)) \in S^2 \times S^2
  \mid z_1 \geq 0,\,z_2=1\}\Big).
\end{align}
As in the previous example, $N$ is open and connected. Moreover,
because it is a subset of $M$, the restriction of the form $\Omega$
given by \eqref{ex_omega}, is a symplectic form. Similarly, $J=z_1$,
$H=z_2$ defines an integrable system on $N$ and $J$ is the momentum
map of a Hamiltonian $S^1$-action. The computations in the previous
example show that we have the same singularities, all of them
non-degenerate.  If $F=(J,H)$, its image is
\begin{equation}
  \label{image_of_F_modified}
  F(N)=[-1,\,1]\times [-1,\,1] 
  \setminus \big(\{ z_1 =0, \, z_2\ge 0\} 
  \cup \{0\leq z_1\leq 1,\, z_2=1\}\big)
\end{equation} 
(see Figure \ref{Fig_im_F_modified}) which is not a closed set, and
hence $F$ is not a proper map.
\begin{figure}[h]
  \centering
  \includegraphics[height=4.5cm]{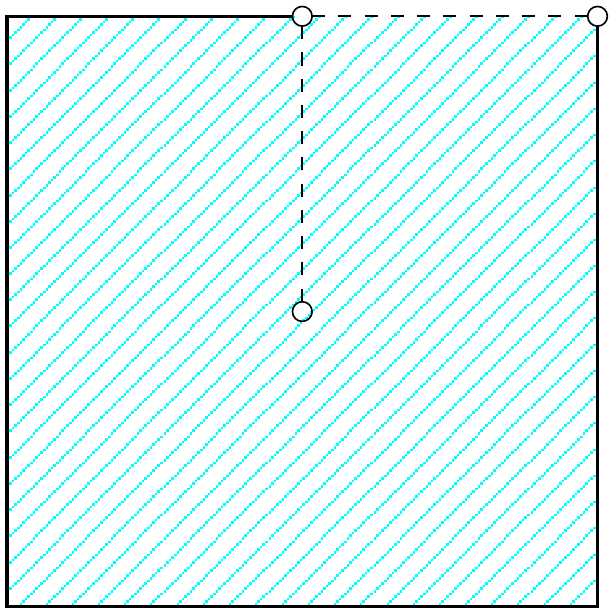}
  \caption{The image $F(N)$.}
  \label{Fig_im_F_modified}
\end{figure}
Define
\[
g(z_1,\, z_2):=\left(z_1,\, \frac{z_2+2}{((z_1-1)^2 +
    h(z_1))(z_1^2+h(z_2))}\right)
\]
and $\widetilde{F}: = g \circ F$, where $F:=(z_1, z_2)$; $h$ is as in
the previous example. To see that $g$ is a local diffeomorphism, it
suffices to note that the Jacobian determinant of $g$ has the
expression $\big(\Delta - (z_2+2) \frac{\partial\Delta}{\partial
  z_2}\big)/ \Delta^2$, where $\Delta:=((z_1-1)^2 +
h(z_1))(z_1^2+h(z_2))$.  Since $\Delta>0$
and $$\partial\Delta/\partial z_2 = 2(z_2-1)(z_1 ^2+h(z_2)) +
((z_1-1)^2 + h(z_1))h'(z_2) <0,$$ it follows that the Jacobian
determinant of $g$ is strictly positive. As in the previous example,
one can check that $g^{-1}(K_1 \times K_2)$ is a compact subset of
$\mathbb{R}^2$, where $K_i$, $i=1,2$, are closed bounded intervals in
$\mathbb{R}$. The argument given in the previous example shows then
that $\widetilde{F}$ is a proper map. Therefore, $(N, \widetilde{F})$
is a proper general semitoric system.  The image $\widetilde{F}$ is
given in Figure \ref{boot}.
\begin{figure}[h]
  \centering
  \includegraphics[height=0.7\textwidth]{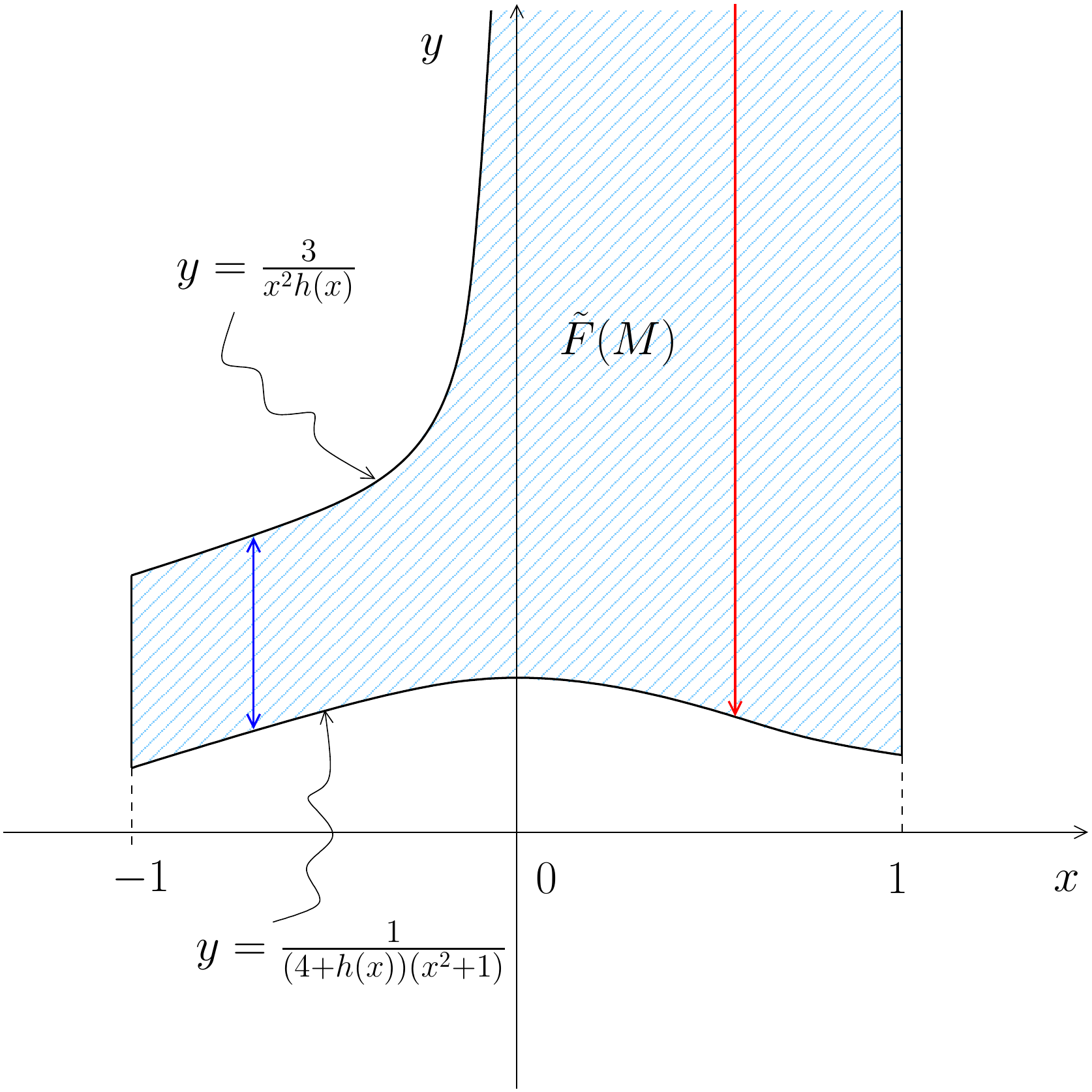}
  \caption{The image $\widetilde{F}(N)$ with the choice $h(-1)=1$.}
  \label{boot}
\end{figure}

Finally, to determine the possible affine invariants associated to this system,
we need to compute $\ell(x)$, the volume of the reduced manifold
$J^{-1}(x)/S^1$. As before, we compute
\[
\ell(x) = \left\{
  \begin{aligned}
    4 \pi, \quad \text{if} \quad x<0\\
    2 \pi, \quad \text{if} \quad x=0\\
    2 \pi(1 + \alpha), \quad \text{if} \quad x>0
  \end{aligned}
\right.
\]
where $\alpha: = \int_0^1 \chi(z_2) {\rm d}z_2\geq 0$. The possible
cartographic invariants are given in Figure \ref{bootpolygon}.
\begin{figure}[h]
  \centering
  \includegraphics[width=0.7\textwidth]{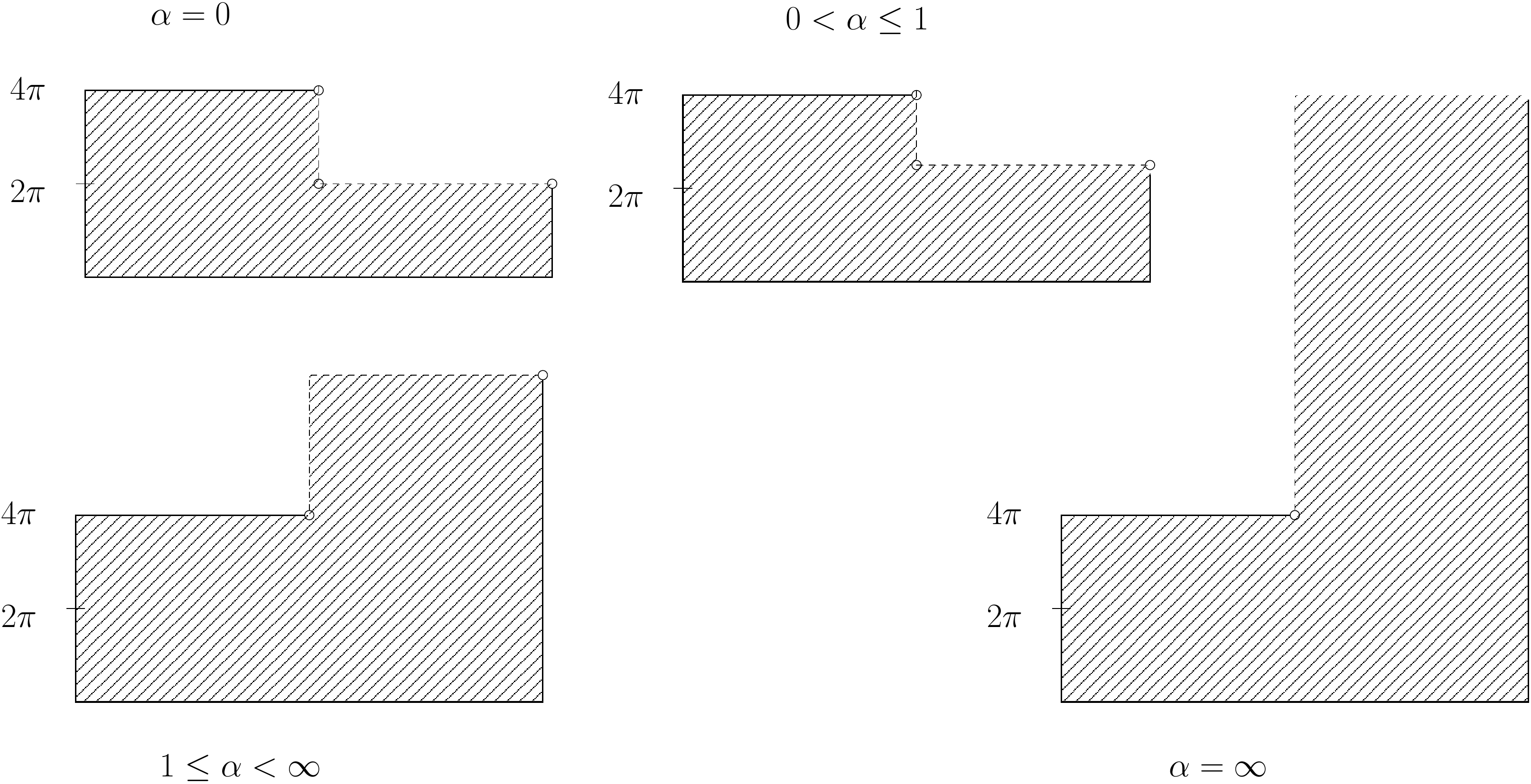}
  \caption{A representative of the cartographic invariants depending on
    $\alpha$.}
  \label{bootpolygon}
\end{figure}
This proves (E.\ref{d7}).
 
\begin{remark}
  When $M$ is compact (so $J,H,F$ are all proper), the cartographic invariant of $F$ is  a polygon which
  is related to the classification of Hamiltonian $S^1$\--spaces by Karshon
  \cite{Ka99}, as explained in \cite{HSS13}.
\end{remark}

\section{Appendix}

\subsection{Bifurcation set}

Let $M$ and $N$ be smooth manifolds. A smooth map $f: M\rightarrow N$
is said to be \emph{locally trivial} at $n_0 \in f(M)$, if there is an
open neighborhood $U \subset N$ of $n_0$ such that $f^{-1}(n)$ is a
smooth submanifold of $M$ for each $n\in U$ and there is a smooth map
$h: f^{-1}(U)\rightarrow f^{-1}(n_0)$ such that $f\times
h:f^{-1}(U)\rightarrow U \times f ^{-1}(n_0)$ is a diffeomorphism. The
\emph{bifurcation set} $\Sigma_f$ consists of all the points of $N$
where $f$ is not locally trivial.

It is known that the set of critical values of $f$ is included in the
bifurcation set and that if $f$ is proper this inclusion is an
equality (see \cite[Proposition 4.5.1]{AbMa1978} and the comments
following it).

\subsection{Linearization of singularities}
\label{sec:Eliasson}

Let $(M, \omega)$ be a connected symplectic $4$-manifold,
$F=(f_1,f_2)$ an integrable system on $(M, \omega)$, and $m \in M $ a
critical point of $F$, i.e., the rank of the derivative (tangent map)
${\rm d}_m F: T_mM \rightarrow \mathbb{R}^2$ of $F$ is either $0$ or
$1$. If ${\rm d}_mF=0$, $m$ is said to be \emph{non-degenerate} if the
Hessians $\operatorname{Hess}f_1(m)$, $\operatorname{Hess}f_2(m)$ span
a Cartan subalgebra of the symplectic Lie algebra of quadratic forms
on the symplectic vector space $({\rm T}_m M, \omega_m)$. If
$\operatorname{rank}\left({\rm d}_mF\right)=1$, we may assume that
${\rm d}_m f_1 \neq 0$.  Let $\iota \colon S \to M$ be an embedded
local $2$-dimensional symplectic submanifold through $m$ such that
${\rm T}_m S\subset \ker({\rm d}_m f_1)$ and ${\rm T}_mS$ is
transversal to the Hamiltonian vector field $\mathcal{X}_{f_1}$
defined by the function $f_1$. This is possible by the classical
Hamiltonian Flow Box Theorem (\cite[Theorem 5.2.19]{AbMa1978}), also
known as the Darboux-Carath\'eodory Theorem (\cite[Theorem
4.1]{PeVN2011}). It is easily seen that the definition does not depend
on the choice of $S$. The point $m$ is called \textit{transversally
  non-degenerate} if $\operatorname{Hess}(\iota^\ast f_2)(m)$ is a
non-degenerate symmetric bilinear form on ${\rm T}_m S$.

For the notion of non\--degeneracy of a critical point in arbitrary
dimensions see \cite{vey} and \cite[Section 3]{san-mono}). In this
paper, we need the following property of non-degenerate critical
points (\cite{Eliasson1984, Eliasson1990}, \cite{VNWa2010}) in terms
of the Williamson normal form (\cite{Williamson1936}), which we state
in any dimension but will only use in dimension $4$.

\begin{theorem}[Eliasson]
  \label{singularities_theorem}
  Let $F=(f_1,\,\ldots,f_n): M \rightarrow \mathbb{R}^n$ be an
  integrable system and $m \in M$ a non-degenerate critical point of
  $F$.  Then there are local symplectic coordinates $(x_1,\,
  \ldots,x_n,\, \xi_1,\,\ldots,\, \xi_n)$ about $m$, in which $m$ is
  represented as $(0,\,\ldots,\, 0)$, such that $\{f_i,\,q_j\}=0$, for
  all $i,\,j$, where the $q_1,\,\ldots,\,q_n$ are defined on a
  neighborhood of $(0,\,\ldots,\,0)$ in $\mathbb{R}^n$ and have one of
  the following expressions:
  \begin{itemize}
  \item[{\rm (a)}] Elliptic component: $q_j = (x_j^2 + \xi_j^2)/2$,
    where $1 \le j \le n$.
  \item[{\rm (b)}] Hyperbolic component: $q_j = x_j \xi_j$, where $1
    \le j \le n$.
  \item[{\rm (c)}] Focus\--focus component: $q_{j-1}=x_{j-1}\, \xi_{j}
    - x_{j}\, \xi_{j-1}$ and $q_{j} =x_{j-1}\, \xi_{j-1} +x_{j}\,
    \xi_{j}$ where $2 \le j \le n-1$.
  \item[{\rm (d)}] Non\--singular component: $q_{j} = \xi_{j}$, where
    $1 \le j \le n$.
  \end{itemize}
  If $m$ does not have hyperbolic components, then the system of
  equations $\{f_i,\,q_j\}=0$, for all $i,\,j$, may be replaced by $
  (F-F(m))\circ \varphi = g \circ (q_1,\, \ldots,\,q_n), $ where
  $\varphi=(x_1,\, \ldots,x_n,\, \xi_1,\, \ldots,\, \xi_n)^{-1}$ and
  $g$ is a diffeomorphism from a neighborhood of $(0,\,\ldots,\,0)$ in
  $\R^n$ onto another such neighborhood, with $g(0,\,\ldots,\,0)=
  (0,\,\ldots,\,0)$.
\end{theorem}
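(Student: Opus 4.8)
The plan is to straighten $F$ onto the constant-coefficient model $q=(q_1,\dots,q_n)$ in two stages: a linear normalization of the quadratic part, followed by removal of the higher-order terms by a normalizing symplectomorphism. First I would treat the leading order. Non-degeneracy means that the Hessians $\operatorname{Hess}f_1(m),\dots,\operatorname{Hess}f_n(m)$, regarded as elements of the Lie algebra $(\mathfrak{sp}(2n,\mathbb{R}),\{\cdot,\cdot\})$ of quadratic forms, span a Cartan subalgebra $\mathfrak{c}$. Williamson's classification of Cartan subalgebras of $\mathfrak{sp}(2n,\mathbb{R})$ then yields a linear symplectomorphism after which $\mathfrak{c}$ is spanned by the elliptic, hyperbolic, and focus-focus blocks of (a)--(c) together with the non-singular directions (d). Hence, after this linear change of coordinates, the quadratic Taylor part of each $f_i$ is a constant-coefficient combination of the $q_j$, so that $\{f_i^{(2)},q_j\}=0$ for all $i,j$; this is the assertion to first order.

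Next I would promote this to an exact statement. The model functions $q_j$ generate a local Hamiltonian action of a group $G=\mathbb{T}^{e}\times\mathbb{R}^{h}\times(\mathbb{T}\times\mathbb{R})^{f}$, with one circle per elliptic block, one line per hyperbolic block, and one $\mathbb{T}\times\mathbb{R}$ per focus-focus block. The target is a symplectomorphism $\varphi$, tangent to the identity, for which $\{f_i\circ\varphi,q_j\}=0$, i.e.\ a $\varphi$ conjugating the singular foliation of $F$ to that of $q$. I would build $\varphi$ by the Moser path method, correcting the normal form order by order: at each step one must solve a cohomological equation $\{q,u\}=r$ for the generating function $u$ of the next correction, where $r$ is the obstruction inherited from the previous order. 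Solvability rests on the fact that the operator $u\mapsto\{q,u\}$ on homogeneous polynomials has well-controlled kernel and image precisely because $\mathfrak{c}$ is a genuine Cartan subalgebra (this is where non-degeneracy is used); averaging over the compact factor $\mathbb{T}^{e}$ and over the circle factors of the focus-focus blocks makes the scheme equivariant and annihilates the resonant obstructions. A Borel-type argument then realizes the resulting formal normalizing series as an honest smooth symplectomorphism up to a flat remainder, and a final pass of the same machinery in the smooth category absorbs that remainder.

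For the strengthened conclusion in the absence of hyperbolic blocks, I would invoke invariant theory. When $G=\mathbb{T}^{e}\times(\mathbb{T}\times\mathbb{R})^{f}$, its ring of smooth invariants near $m$ is generated exactly by $q_1,\dots,q_n$ (the elliptic invariants are the $x_j^2+\xi_j^2$ and the focus-focus invariants are the two quadratics of (c), with no further generators). Since $\{f_i\circ\varphi,q_j\}=0$ says that each $f_i\circ\varphi$ is $G$-invariant, it follows that $f_i\circ\varphi=g_i(q_1,\dots,q_n)$ for smooth functions $g_i$, that is, $(F-F(m))\circ\varphi=g\circ(q_1,\dots,q_n)$. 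Non-degeneracy forces the quadratic parts to be independent, so $\mathrm{d}g(0)$ is invertible and $g$ is a local diffeomorphism with $g(0)=0$, as required.

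The hard part will be the smooth, rather than merely formal or real-analytic, solvability of the homological equations, together with the passage from the formal normal form to a genuine smooth one, in the presence of hyperbolic blocks, where the flows are non-recurrent and averaging is unavailable. This is precisely the technical core of Eliasson's theorem, and the reason the result lies deeper than a routine Poincar\'e--Dulac normalization; for the application in this paper, however, only the hyperbolic-free case in dimension $4$ is needed, where the elliptic and focus-focus circle symmetries carry most of the weight.
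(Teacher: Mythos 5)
The paper does not prove this statement at all: it is quoted as Eliasson's theorem, with the proof delegated entirely to the cited literature (\cite{Eliasson1984, Eliasson1990}, \cite{VNWa2010}, together with Williamson's linear classification \cite{Williamson1936}). So there is no in-paper argument to compare yours against; what can be assessed is whether your outline matches the structure of the proofs in those references, and in broad strokes it does: linear normalization via Williamson's classification of Cartan subalgebras of quadratic forms, an iterative scheme solving homological equations $\{q,u\}=r$ to kill higher-order terms, and, in the absence of hyperbolic blocks, identification of the smooth Poisson-commutant of $(q_1,\dots,q_n)$ with smooth functions of $(q_1,\dots,q_n)$.

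Three places where your sketch is thinner than what the cited proofs actually must do. First, the statement covers critical points of any rank, and when $\operatorname{rank}({\rm d}_m F)=k>0$ the Hessians of the $f_i$ at $m$ are not intrinsically defined; one must first apply the Darboux-Carath\'eodory (flow box) theorem to split off the $k$ non-singular components $q_j=\xi_j$ of type (d) and pass to a symplectic transversal, and only then does the Cartan-subalgebra/Williamson step make sense. Your linear stage as written applies only to rank-zero points. Second, your invariant-theory step asserts that the smooth invariants of the local $G$-action are generated by $q_1,\dots,q_n$; for elliptic factors this follows from Schwarz-type theorems on invariants of compact group actions, but the focus-focus factor contributes a non-compact $\mathbb{T}\times\mathbb{R}$, to which no such general theorem applies --- establishing this, and the smooth focus-focus normal form itself, is precisely the content of \cite{VNWa2010}, i.e.\ a paper-length argument rather than a corollary of averaging. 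Third, ``a final pass of the same machinery absorbs the flat remainder'' is exactly the step that makes the $C^\infty$ statement deep (and which is genuinely delicate even in the purely elliptic case treated in \cite{Eliasson1990}); you flag this honestly, but it means your text is an outline whose technical core is invoked as a black box --- which, to be fair, is also exactly how the paper treats the theorem.
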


If $M$ is $4$\--dimensional and $F$ has no hyperbolic singularities,
$(q_1,\,q_2)$ is:
\begin{enumerate}[{\rm (T.1)}]
\item \label{t1} if $m$ is a critical point of $F$ of rank zero, then
  $q_j$ is one of
  \begin{itemize}
  \item[{\rm (i)}] $q_1 = (x_1^2 + \xi_1^2)/2$ and $q_2 = (x_2^2 +
    \xi_2^2)/2$.
  \item[{\rm (ii)}] $q_1=x_1\xi_2 - x_2\xi_1$ and $q_2
    =x_1\xi_1+x_2\xi_2$;
  \end{itemize}
  on the other hand,
\item \label{t2} if $m$ is a critical point of $F$ of rank one, then
  \begin{itemize}
  \item[{\rm (iii)}] $q_1 = (x_1^2 + \xi_1^2)/2$ and $q_2 = \xi_2$.
  \end{itemize}
\end{enumerate}
A non-degenerate critical point is called \textit{elliptic-elliptic,
  focus-focus}, or \textit{trans\-ver\-sally-elliptic} if both
components $q_1,\, q_2$ are of elliptic type, $q_1,\,q_2$ together
correspond to a focus-focus component, or one component is of elliptic
type and the other component is $\xi_1$ or $\xi_2$, respectively.

For the spherical pendulum, see Figure
\ref{critical_set_spherical_pendulum.figure} where the critical points
of $F$ lie in $F({\rm T}S^2)$.

\subsection{Affine manifolds} \label{sec:affine}

An \textit{affine $n$-dimensional manifold} is a smooth manifold
admitting an atlas whose change of chart maps are in the affine group
of $\mathbb{R}^n$, i.e., in
\begin{align*}
  \operatorname{Aff}(n, \mathbb{R})&:=
  \operatorname{GL}(n, \mathbb{R}) \ltimes \mathbb{R}^n\\
  &: = \left.\left\{
      \begin{bmatrix}
        {U}&{u}\\
        {0}&1
      \end{bmatrix} \;\right| \; {U} \in \operatorname{GL}(n,
    \mathbb{R}), \;\; u \in \mathbb{R}^2 \right\}\subset
  \operatorname{GL}(n+1, \mathbb{R}).
\end{align*}
An \textit{integral affine $n$-dimensional manifold} is an affine
manifold admitting an atlas whose change of chart maps are in
$\operatorname{Aff}(n, \mathbb{Z}):= \operatorname{GL}(n, \mathbb{Z})
\ltimes \mathbb{R}^n$, i.e., $U \in \operatorname{GL}(n, \mathbb{Z})$
in the definition above.

Let $M$ be a connected $n$-dimensional manifold, $m_0 \in M$, and $p:
\widetilde{M} \rightarrow M$ its universal covering manifold, i.e.,
the set of homotopy classes of smooth paths $\lambda:[0,1] \rightarrow
M$ starting at $\lambda(0) = m_0$ and keeping the endpoints fixed;
$p([\lambda]) :=\lambda(1)$.  Recall that $\widetilde{M}$ is a smooth
simply connected $n$-dimensional manifold and that $p$ is a covering
map.  The group of deck transformations of $p$, i.e., all
diffeomorphisms $\chi:\widetilde{M}\rightarrow \widetilde{M}$ such
that $p \circ \chi = p$, is isomorphic to the first fundamental group
$\pi_1(M)$ (based at $m_0$).

If $M$ is, in addition, an affine manifold (see, e.g., \cite[Section
2.3]{GoHi1984} for more information), then $p$ induces an affine
manifold structure on $\widetilde{M}$ by requiring $p$ to be an affine
map, i.e., its local representative is affine in any pair of local
charts. A \textit{developing map} for $M$ is an affine immersion
$\zeta:\widetilde{M} \rightarrow \mathbb{R}^n$. It is well-known (see,
e.g. \cite[page 641]{GoHi1984}) that each connected affine manifold
has at least one developing map and that if $\zeta': \widetilde{M}
\rightarrow \mathbb{R}^n$ is another developing map then there is a
unique $A \in \operatorname{Aff}(n, \mathbb{R})$ such that $\zeta' =
{A}\zeta$. In addition, for any developing map $\zeta: \widetilde{M}
\rightarrow \mathbb{R}^n$, there is a unique equivariant
\textit{monodromy homomorphism} $\mu: \pi_1(M) \rightarrow
\operatorname{Aff}(n,\mathbb{R})$, i.e., $\zeta([\lambda\star \gamma])
= \mu([\lambda]) \zeta([\gamma])$ for any $[\lambda] \in \pi_1(M)$ and
$[\gamma] \in \widetilde{M}$, where $\star$ denotes composition of
paths by concatenation.

\medskip

\paragraph{\textbf{Acknowldegments.}} 
This paper was completed under the excellent hospitality of the Bernoulli Center
in Lausanne, where the first and third authors are co\--organizers of
a program with Nicolai Reshetikhin, on semiclassical analysis and
integrable systems. AP was partially supported by NSF CAREER Grant
DMS-1055897 and NSF Grant DMS-0635607 and an Oberwolfach Leibniz
Fellowship.  TSR was partially supported by the government grant of
the Russian Federation for support of research projects implemented by
leading scientists, Lomonosov Moscow State University under the
agreement No.  11.G34.31.0054 and Swiss NSF grant 200021-140238. SVN
was partially supported by Institut Universitaire de France, the
Lebesgue Center (ANR Labex LEBESGUE), and the ANR NOSEVOL grant.

\noindent
\\
\noindent
\\
{{\'A}lvaro Pelayo}\\
School of Mathematics\\
Institute for Advanced Study\\
One Einstein Drive\\
Princeton, NJ 08540, USA.
\\
\\
Washington University\\
Mathematics Department\\
One Brookings Drive, Campus Box 1146\\
St Louis, MO 63130-4899, USA.
\\
{\em E\--mail}: {apelayo@math.wustl.edu}\\
\url{http://www.math.wustl.edu/~apelayo}

\medskip\noindent

\smallskip\noindent
Tudor S. Ratiu\\
Section de Math\'ematiques and Bernoulli Center\\
Station 8\\
Ecole Polytechnique F\'ed\'erale
de Lausanne\\
CH-1015 Lausanne, Switzerland\\
{\em E\--mail}: \texttt{tudor.ratiu@epfl.ch}

\medskip\noindent

\medskip\noindent

\noindent
\noindent
San V\~u Ng\d oc \\
Institut Universitaire de France
\\
\\
Institut de Recherches Math\'ematiques de Rennes\\
Universit\'e de Rennes 1\\
Campus de Beaulieu\\
F-35042 Rennes cedex, France\\
{\em E-mail:} \texttt{san.vu-ngoc@univ-rennes1.fr}\\
{\em Website}: \url{http://blogperso.univ-rennes1.fr/san.vu-ngoc/}

\end{document}